\newtheorem{thm}{Theorem}[section]
\newtheorem{lemma}[thm]{Lemma}
\newtheorem{proposition}[thm]{Proposition}
\newtheorem{definition}[thm]{Definition}
\theoremstyle{definition}
\newtheorem{example}[thm]{Example}
\theoremstyle{remark}
\newtheorem{remark}[thm]{Remark}
\numberwithin{equation}{section}
\newcommand{\R}{\mathbb{R}}
\newcommand{\N}{\mathbb{N}}
\newcommand{\E}{\mathbb{E}}
\renewcommand{\P}{\mathbb{P}}
\providecommand{\tto}{\mathop{\rightrightarrows}\nolimits}
\pgfplotsset{compat=1.17}
\definecolor{forestgreenweb}{rgb}{0.13, 0.55, 0.13}
\newcommand{\DSr}[1]{{#1}}
\newcommand{\GL}[1]{{{#1}}}
\begin{document}
	
	\begin{center}
		{\LARGE The value of Shared Information for allocation of drivers in ride-hailing: a proof-of-concept study.}
		
		\vspace{1cm}
		
		{\Large \textsc{Gianfranco Liberona, David Salas, Léonard von Niederh\"{a}usern}}
	\end{center}
	
	\bigskip
	
	\noindent\textbf{Abstract.} For drivers in ride-hailing companies, allocation within the city is paramount to get matched with rides. This decision depends on many factors, where some of them (such as demand and allocation of others) are unknown for the drivers, but are available for the company. In this work, we investigate whether it is beneficial or not for the ride-hailing company to share this information with their drivers. To do so, we study the problem through the lens of Stackelberg games, and we propose a new indicator called the \emph{Expected Value of Shared Information}. We present a simplified model to conduct a proof-of-concept study: we provide explicit single-level reformulations of the bilevel programming problems derived from the model, and perform several simulations with randomly generated data. Our preliminary results suggest that sharing information could be beneficial and deserves to be further studied. \bigskip
	
	\noindent\textbf{Key words.} Stochastic Programming; ride-hailing; Stackelberg Game; Expected Value of Shared Information.

	\section{Introduction \label{sec:intro}}

	The growing popularity of ride-hailing companies, such as Uber and Lyft, has changed the way we move around the city. There is a new relation between passengers and drivers, which now interact \GL{throughout} this new third party.
	Several new problems have arisen from this context, such as spatio-temporal pricing \cite{bimpikis2019spatial}, reallocation of resources \cite{balseiro2020dynamic,he2020robust}, or online matching \cite{housni2021matching} (see, e.g., \cite{wang2019ridesourcing,benjaafar2020operations} for some recent surveys). Here, we are interested in the way information affects the relation between a ride-hailing company and its drivers. \medskip
	
	To understand this relation, let us describe the general framework we are set in, which is motivated by recent literature \cite{wang2019ridesourcing,benjaafar2020operations,bimpikis2019spatial}. First, a city can be understood as a network of interconnected locations, to which drivers are allocated. At every given time, new passengers appear in the locations, requesting a ride. The ride-hailing company then matches each passenger with a driver in the same location, and receives a compensation proportional to the cost of the ride. While the compensation can be assumed to be constant, the company has the liberty to adapt prices, generating different fares depending on the location and time. \medskip
	
	Of course, surge pricing affects the demand. But more interesting for us, spatial pricing (different fares between locations) can induce reallocation of unmatched drivers. Indeed, a particularity of the ride-hailing companies is that they do not employ drivers, but rather they consider drivers as independent operators using the matching service. Thus, drivers are free to reallocate themselves whenever they consider it convenient. \medskip
	
	Some key elements for an unmatched driver to decide whether to change location or not are the following: the available ride fares, the costs of reallocation, the number of demanded rides at each location, and the number of previously matched drivers arriving to each location (who can be matched to other passengers as soon as they finish their previous rides). The first two are known information for the drivers, but the demand (exogenous uncertainty) and the previously matched drivers (endogenous uncertainty) are not. On the other hand, at each stage of reallocation and matching, the ride-hailing company can forecast the exogenous uncertainty, and has all the information available for the endogenous one.\medskip
	
	\GL{The goal of this work is to assess whether the ride-hailing company can benefit from sharing information} with the unmatched drivers. This behavior is observed nowadays, where ride-hailing companies provide some demand information to the drivers beyond spatial pricing (see, e.g., the Uber's driver-app description in \cite{UberApp2021}).\medskip
	
	The notion of sharing information has been studied before, for example in the context of supply chains \cite{TEUNTER20181044,LIU2021572}, network restoration \cite{SHARKEY2015309} and pricing problems \cite{SUN2021511}, where different parties on each context, can be benefited from sharing information between them. Here, we propose to study the problem of value of information through the lens of Stackelberg games (see, e.g., \cite{dempe2002foundations,DempeEtAl2015}).\medskip
	
	On the one hand, the company acts as the leader, deciding the spatial prices. On the other hand, the drivers act as followers, solving a stochastic allocation equilibrium problem. \DSr{The main variant in this work is that we consider the leader to have perfect information, due to advanced forecasting of exogenous uncertainty, and asymmetry of information with respect to the follower for endogenous uncertainty. This variant distinguishes our problem from classic stochastic Stackelberg games, where the leader usually decides in a  \textit{here-and-now} fashion, prior \GL{to the revelation of} uncertainty, and the follower decides after the revelation of uncertainty, in a \textit{wait-and-see} fashion (see, e.g., \cite{BurtscheidtClaus2020}). }\medskip
	
	Inspired by the classic Expected Value of Perfect Information in (single level) stochastic programming (see, e.g., \cite[Chapter 4]{Birge_Introduction_2011}), we propose to study the \textit{Expected Value of Shared Information (EVSI)}, which is a new indicator that measures the impact, for the leader, of forecasting and then sharing the perfect information with the drivers. In a nutshell, the EVSI compares two problems: when only the leader has perfect information, and when both agents have perfect information. Since the information is modeled as the realization of a random variable, this comparison is done in average, assuming that the game is repeated many times.  \medskip

	\DSr{Our main motivation is to understand the economic value of information sharing in the context of ride-hailing companies, and so we start in Section \ref{sec:Model} by presenting the detailed model we work with, which is the repeated one-stage pricing problem of the ride-hailing company, coupled with the reallocation problem for the unmatched drivers. At this stage, we consider a simplified version of the problem, since we are interested in the proof-of-concept of the EVSI indicator. Since this indicator can be applied in the general framework of stochastic bilevel games,  In Section \ref{sec:Parametric} we develop the information sharing concept in an abstract mathematical setting.  The next two sections are devoted to \GL{computing} the EVSI for our model: In Section \ref{sec:Reformulation}, we describe how we can reduce the various bilevel programming problems we encounter to single mixed-integer bilinear programming problems, which allows us to efficiently find the solutions for different scenarios. In Section \ref{sec:Numerical}, we provide a numerical analysis with randomly generated data, which suggests that sharing information is indeed beneficial in this problem.} We finish with some conclusions and perspectives in Section \ref{sec:Final}.

	
	\section{The allocation problem in ride-hailing: a simplified model\label{sec:Model}}
	
	\DSr{In this section, we present the model we will study \GL{throughout this} work. While literature is vast in how to model the drivers dynamics in ride-hailing, several studies consider reduced models due to the complexity of the real problem (see, e.g., \cite{bimpikis2019spatial,Castillo2017GooseChase}). In this work, our main attention is the asymmetry of information between the company and the drivers, and so, we do several simplifying assumptions in other dimensions, in order to obtain a tractable problem.}  
	
	\subsection{Model and simplifying assumptions}
	
	Let us consider the following situation: at a certain moment, a driver associated with a ride-hailing company that has not been matched with a passenger must decide whether to keep searching for a match around \DSr{its} current location, or to move to another one within the city. We model the different locations as a finite set of zones, $I = \{1,\ldots,n\}$, connected as a directed graph. If the driver is in the $i$th zone, \DSr{its} reallocation decision will depend on five factors:
	
	\begin{enumerate}
		\item The vector of (spatial) prices fixed by ride-hailing company, $p = (p_i\ :\ i\in I)$.
		\item The vector of demands of each zone $d = (d_i\ :\ i\in I)$.
		\item The costs of moving to another zone, $\alpha_i = (\alpha_{ij}\ :\ j\in I)$. Of course, $\alpha_{ii} = 0$.
		\item The vector of previously unmatched drivers that will be at each node, $x = (x_i\ :\ i\in I)$.
		\item The vector of occupied drivers who will arrive at each node (and will become available at that node), $y=(y_i\ :\ i\in I)$. 
	\end{enumerate}

		\begin{remark} The vector $p$ \GL{is} considered to be the (surge) prices of an average trip, in order to avoid time-length of trips into the analysis. This is a common practice in literature (see, e.g., \cite{Castillo2017GooseChase,Yan2020Dynamic}). Similarly, the reallocation \GL{costs} are consider to be average, consistently with the discretized model of zones. 
			
		\end{remark}
	
	\paragraph{Single-stage model}  
	
	Several studies (see, e.g., \cite{bimpikis2019spatial,balseiro2020dynamic})  consider a dynamic multi-stage problem where: 1) the demand distribution varies from one stage to the next, and they are correlated temporally; 2) at each stage, drivers can get in and out from the app, changing the total labor force; 3) Drivers take into account future estimations of the demand to reallocate.\medskip
		
		In this work, we consider a much more simple situation, where the company and all drivers play a single-stage that repeats many times, and where each repetition is independent \GL{from} the others. Moreover, we assume that the the number of unmatched drivers is fixed, and that drivers only consider the current information to decide. This assumption led us to model occupied drivers as an independent random variable, instead of \GL{a} dynamic stochastic process, as we will see later on. \medskip
		
		While such \GL{a} situation is not realistic, this static model has theoretical value. Independent repeated stages are used as the baseline before developing more complex dynamic models. As we will see in the following sections, in terms of sharing information, the static model had already several difficulties that we addressed during this work.\medskip
		
		In what follows, we will consider random events as random variables defined over a single measurable space $(\Omega,\mathscr{F})$, representing the state of nature. We denote by $\omega$ an arbitrary element of $\Omega$. 
	
	\paragraph{Demand and willingness-to-pay}
	
	\DSr{We consider that the demand on each zone $i\in I$, depends on two factors: a random variable which models people requesting a ride, and the marginal price $p_i$, which induces a reduction associated with the willingness-to-pay of riders (see, e.g., \cite{Castillo2017GooseChase}). Here we omit the effects of waiting times, considering zones as a single node.}\medskip
	
	In this work, we model this dependency as a nominal value $d_{0,i}(\omega)$ which represents the demand for the minimal price $p_{i,\min}$, multiplied by a linear discount factor depending on the price:
	\begin{equation}\label{eq:Def-Demand}
		d_i = d_i(p_i,\omega) = d_{0,i}(\omega)\left(1-\delta\frac{p_i - p_{i,\min}}{p_{i,\max} - p_{i,\min}}\right).
	\end{equation}
The willingness-to-pay and the willingness-to-wait are two factors that reduce the effective demand in ride-hailing. In the literature, these discount factors are considered to be exponential-like functions (see, e.g., \cite{Yan2020Dynamic}), and therefore highly nonlinear. Thus, \GL{the decision is} to conduct \GL{this} study with a simpler linear discount model depending only on prices. \GL{Piecewise linear models} can be used to approximate the exponential model, and so this study can be used as a baseline for more accurate discount functions.
	
	\paragraph{Common belief on uncertain parameters}
	
	Since each driver has limited observability about the other drivers, the value of $y=(y_i\ :\ i\in I)$ is uncertain, even though it is known information for the ride-hailing company. Thus, from the unmatched drivers' perspective, $y=y(\omega)$ is also a random variable.\medskip
	
	\DSr{We will assume that every driver considers that the random variable $\xi(\omega) = (y(\omega),d_0(\omega))$ follows a common distribution $D_F$. Then, to decide \GL{whether} to reallocate or not,} each driver must solve the following optimization problem:
	\begin{equation}\label{eq:Problem-OneDriver}
		\max_{j\in I} \,\mathbb{E}_{\xi\sim D_F}\left[ p_j\min\left(\frac{d_j}{x_j+y_j},1\right) \right] - \alpha_{ij},
	\end{equation}
	where $x_j+y_j$ is the amount of available drivers in zone $j$, and the value $\min\left(\frac{d_j}{x_j+y_j},1\right)$ represents the probability of being matched in zone $j$. \DSr{On the one hand, if $d_j\geq x_j+y_j$, then the driver will be matched. On the other hand, if $d_j< x_j+y_j$, the probability of being matched coincides with $d_j/(x_j+y_j)$, assuming that in such a case, all passengers will be matched.} 
	
	\paragraph{Single decision-maker}
	
	\DSr{\GL{In this study}, we model all unmatched drivers as a single new follower, who aims to maximize the social welfare of all drivers.} We will assume that only unmatched drivers report to this central decision-maker, while matched drivers become unavailable. Thus, the follower must decide the allocation of unmatched drivers $x=(x_i\ :\ i\in I)$ while the vector $y = (y_i\ :\ i\in I)$ is uncertain. \DSr{We maintain the assumption that the single decision-maker assumes that $\xi = (y,d_0)\sim D_F$.}\medskip
	
	\DSr{In practice, we should consider that drivers interact in a game with imperfect information, that should lead to a single-leader multi-follower game (see \cite{HuFukushima2015}), with the caveat that the lower-level game should be a Bayesian game (see, e.g., \cite{Tadelis2013Game}). However, such a model poses very challenging theoretical difficulties, and the theory is considerably underdeveloped in comparison to (one-leader-one-follower) Stackelberg games (see, e.g., \cite{AusselSvensson2020Short}).}\medskip
	
	\DSr{Other (more classic) simplifications have been considered in the literature, such as seeing drivers as part of a continuum and model their interaction as a single flow equilibrium problem (see, e.g., \cite{bimpikis2019spatial}). We prefer to consider one aggregating follower since in this model, we still can study the value of sharing information.  Moreover, up to a certain scale, a single follower captures the situation where drivers can communicate between them outside the ride-hailing platform, and they can coordinate their allocation (see, e.g.,  \cite{tripathy2022driver}).}
	
	\subsection{Stackelberg problem formulation}
	
	To model the decision process of the single follower, let us assume that there is an amount of $N_0$ drivers unmatched, with initial allocation
	$x_0 = (x_{0,1},
	\ldots, x_{0,n})\in \R^n_+$.
	Let us define the variable $v_{ij}$ as the amount of unmatched drivers who will change from zone $i$ to zone $j$, and let $v$ be the matrix that collects all this information. In this context, $v_{ii} = 0$ for all $i\in I$. Then, we can compute a reallocation $x$ in terms of the displacement matrix $v$ simply as:
	\begin{equation}\label{eq:Kirchoff}
		x_{j}(v) = x_{0,j} + \sum_{i\neq j} v_{ij} - \sum_{k\neq j} v_{jk},\quad\forall j\in I.
	\end{equation}
	\GL{Throughout this} work, we will assume that
	$x_{0} = (x_{0,1},\ldots,x_{0,n})>0$. If for some zone $i\in I$ one has that $x_{0,i} = 0$, the developments can be easily adapted by removing the variables $\{v_{i,j}\ : j\neq i\}$. Then, for a given price vector $p$, the aggregated allocation problem is posed as follows:
	\[
	F(p) := \begin{cases}
		\displaystyle \max_{\GL{v}} & \displaystyle \sum_{i=1}^n c p_i\mathbb{E}_{\DSr{\xi\sim D_F}}\left[\min(x_i(v)+y_i,d_i)\right] - \sum_{i\neq j} \alpha_{ij}v_{ij}\\
		\mbox{s.t.}\quad&\begin{cases}
			v\geq 0\\
			\displaystyle \sum_{k\neq j}v_{jk} \leq x_{0,j},\quad \forall j\in I\\
		\end{cases}
	\end{cases},
	\]
	where $c\in(0,1)$ is the fraction of the ride price that the driver gets. \medskip
	
	\GL{Throughout} this work, we consider that $D_F$ is given as the following product distribution: 1) the vector of previously matched drivers who will arrive at each node, follow a uniform distribution, that is, $y_j\sim U(0,\bar{y})$ where $\bar{y}$ is a constant value for all nodes. This distribution represents the lack of information for the unmatched drivers about the matched ones; and 2) the vector of nominal demand as a discrete one, considering $m\in\N$ feasible scenarios. \DSr{That is, we consider $\omega_1,\ldots,\omega_m$ scenarios, with probabilities $\rho_1,\ldots,\rho_m$, respectively. For each $k\in\{1,\ldots,m\}$, the demand $d$ is given by $d(p,\omega_k) = (d_i(p_i,\omega_k))_{i\in I} = (d_{i,k})_{i\in I}$. Observe that, in order to simplify the exposition, we have omitted the dependency of $p$ in the expression $(d_{i,k})_{i\in I}$.} Thus, we can write
	\begin{equation}\label{eq:ScenariosDecomposition}
		\begin{aligned}
			\mathbb{E}_{\xi\sim D_F}\left[p_i\min(x_i(v)+y_i,d_i)\right] &= -\E_{\xi\sim D_F}(p_i\max(-x_i(v)-y_i,-d_i))\\
			&= -\sum_{k=1}^m p_i\E_{y\sim U}[\max(-x_i(v)-y_i,-d_{i,k})]\cdot \rho_k\GL{.}
		\end{aligned}
	\end{equation}
	Therefore, the follower will deal with a discrete version of its original problem $F(p)$, given by 
	\begin{equation}\label{eq:FollowerProblem-DiscreteDistribution}
		\small
		F_m(p) := \begin{cases}
			\displaystyle \min_{\GL{v}} & c \displaystyle \sum_{i=1}^n \sum_{k=1}^m p_i\E_{y\sim U}[\max(-x_i(v)-y_i,-d_{i,k})]\cdot \rho_k + \sum_{i\neq j} \alpha_{ij}v_{ij}\\
			\\
			\mbox{s.t.}\quad&\begin{cases}
				-v\leq 0\\
				\displaystyle \sum_{j\neq i}v_{ij} - x_{0,i}\leq 0,\quad \forall i\in I
			\end{cases}
		\end{cases} \GL{.}
	\end{equation}
	
	Now, the ride-hailing company must decide the price vector $p$. The company does not necessarily know the exact value of the demand vector $d$, but has a good forecasting capacity of it (see, e.g, \cite{UberForecasting2018,LyftForecasting2019}). Also, it knows the vector $y$ of occupied drivers. Since the company aims to maximize its revenues, for each realization of $\xi = (d_0,y)$, it must solve the following bilevel programming problem:
	\begin{equation}\label{eq:Leaders-Problem-WS}
		L(d_0,y) := \begin{cases}
			\displaystyle \max_{\GL{p,v}} & \displaystyle \sum_{i=1}^n (1-c) p_i\cdot \min(x_i(v)+y_i,d_i)\\
			\mbox{s.t.}\quad&\begin{cases}
				p_i \in [ p_{i,\min}, p_{i,\max} ],\quad \forall i\in I\\
				v \mbox{ solves } F_m(p).
			\end{cases}
		\end{cases}   
	\end{equation}

	\DSr{The leader and the follower \GL{solve their respective} problems several times, since this is a game that repeats constantly. Thus, the asymmetry of information in this model comes from the fact that the leader observes the realization of $\xi = (d_0,y)$ each time to decide the surge pricing. Sharing information (as we will see in the next section), would mean to reveal the observed value of $\xi$ with the drivers. Naturally, this should be a long-term policy and so, the leader is also interested in the average revenues obtained by solving  \eqref{eq:Leaders-Problem-WS} repeated times. \medskip
		
		Here, we consider that $\xi$ might follow a different distribution, $\xi\sim D_L$, which is known by the leader. This consideration comes from the fact that the ride-hailing companies collect historical data on demand and ratio of occupancy of the drivers, allowing them to obtain an accurate model for the distribution $D_L$. In contrast, drivers only have indirect information on \GL{these} parameters, getting a much rougher model on the behavior of $\xi$.\medskip
		
		In this context, for example, distribution of $d_0(\omega)$ should be a multivariate normal-like distribution around a nominal value $\bar{d}_0 = (\bar{d}_{0,i}\ :\ i\in I)$, for which the scenarios $\omega_1,\ldots,\omega_m$ provide a piecewise constant approximation of it.}
	
	\begin{remark} In what follows, to ease the notation, we will omit the dependency $x_i(v)$ given by \eqref{eq:Kirchoff}, and simply write $x_i$ instead. The reader should keep in mind, though, that $(x_i\ :\ i\in I)$ is not a dependent variable, but rather an expression in terms of the variables $(v_{i,j}\ :\ i\neq j\in I)$. In particular, the leader's decision $p$ and the follower's problem are coupled by the product $p_ix_i$, which gives the formulation its bilevel structure.
	\end{remark}
	
	
	\section{Parametric Bilevel Problems and Expected Value of Shared Information\label{sec:Parametric}}
	
	\DSr{In stochastic bilevel optimization, the structure of Problem \eqref{eq:Leaders-Problem-WS} has not gotten too much attention. In general, the structure of stochastic bilevel optimization is, as understood in the literature (see, e.g., \cite{BurtscheidtClaus2020}), that the leader decides \textit{here-and-now}, and the follower decides \textit{wait-and-see}, that is:
		\[
		\text{Leader decides }p\to \xi=(y,d_0)\text{ is revealed}\to\text{Follower decides } v.
		\]
		This is the natural structure for a Stackelberg game, since the \GL{follower} reacts to the \GL{leader}'s decision, and so it is usually understood that the \GL{leader} decides first. However, this sequence is the opposite \GL{of} the information structure that we have in Problem \eqref{eq:Leaders-Problem-WS}: in our problem \GL{it is} the \GL{leader} that decides \textit{wait-and-see} and the \GL{follower} decides \textit{here-and-now}. To make both aspects compatible, we consider that, prior to the revelation of the random variable, the \GL{follower} \textit{commits} to a policy $p\in[p_{\min},p_{\max}]\mapsto v(p)$ on how to react to the prices $p$, prior the revelation of the random variable $\xi$. This leads to the following decision sequence:
		\begin{align}
			\small \text{Follower commits to }v(\cdot)&\small\to \xi=(y,d_0)\text{ is revealed}\notag\\
			&\small\to\text{Leader decides } p\to \text{Follower implements }v(p).\label{eq:schemeinfo}
		\end{align}
		
		In this section, we present the abstract model that we will use to describe the interaction between a ride-hailing company and its drivers, following this latter decision sequence. The setting fits into the general framework of stochastic bilevel optimization \GL{but, as described before}, with the caveat that randomness is revealed after the followers' commitment, differing from the usual concept of the problem (see, e.g., \cite{BurtscheidtClaus2020}).}
	
	
	\subsection{Here-and-now \GL{follower}'s formulation}
	We consider the following optimistic parametric bilevel programming problem
	
	\begin{equation}\label{eq:Def-ParametricProblem}
		\varphi(\xi) = \left\{ 
		\begin{array}{l}
			\displaystyle\min_{p,v}\,\,  \theta(p,v,\xi)\\
			\text{s.t. }\left\{ \begin{array}{l l}
				p\in X &  \\                
				v\text{ solves } \left\{\begin{array}{l}
					\displaystyle\min_{\GL{v}} f\left(p,v,\xi\right) \\
					\text{ s.t. } v\in V(p),
				\end{array}\right.
			\end{array} \right.
		\end{array}\right.
	\end{equation}
	where $\xi \in \Xi\subset\R^{k}$  is the parameters' vector. Here, the function $\varphi:\Xi\to\R$  is the value function of Problem (\ref{eq:Def-ParametricProblem}). For each vector $\xi\in \Xi$, the leader aims to minimize the loss function $\theta:\R^n\times\R^m\times\R^{k}\to \R$. It only controls the first variable $p\in X\subset\R^n$, which we call the leader's decision. The set of admissible leader's decisions $X\subset\R^n$ is fixed.\medskip
	
	Similarly, for each $\xi\in\Xi$ and each leader's decision $p\in X$, the follower aims to minimize the loss function $f:\R^n\times\R^m\times \R^{k}\to \R$. It only controls the second variable $v\in V(p)\subset\R^m$, which we call the follower's decision. The set of admissible decisions $V(p)\subset \R^m$ depends on the leader's decision $p$, inducing a set-valued map $V: X\tto \R^m$. The range of $V$ is contained in an ambient set $\bar{V}\subset\R^m$, that is,
	\[
	R(V):= \bigcup_{p\in X} V(p)\subset \bar{V}.
	\]
	In what follows, we consider the following (standard) assumptions over Problem (\ref{eq:Def-ParametricProblem}):
	\begin{enumerate}[label=(\emph{H\arabic*}), ref=(H\arabic*)]
		\item\label{hyp:AmbientConvexCompact} The sets $X$ and $\bar{V}$ are nonempty, convex and compact, and $\Xi$ is nonempty and closed.
		\item\label{hyp:Continuity} The loss functions $\theta$ and $f$ are continuous.
		\item\label{hyp:ConstraintMapContinuous} The set-valued map $V$ has nonempty convex compact values, and it is both upper and lower semicontinuous (in the sense of Painlevé-Kuratoski and Berge. See, e.g., \cite{Ichiishi1983,AubinFrankowska1990}).
	\end{enumerate}
	
	Under this framework, which is fairly general, one can ensure the existence of solutions of the parametric Problem \eqref{eq:Def-ParametricProblem}. This existence result is classic in the literature (see, e.g., \cite{dempe2002foundations}), but we recall it for completeness.
	
	\begin{lemma}\label{lem:Existence} Assume hypotheses \ref{hyp:AmbientConvexCompact}-\ref{hyp:ConstraintMapContinuous}. Then, for each $\xi\in\Xi$, Problem \eqref{eq:Def-ParametricProblem} admits at least one solution. Furthermore, the value function $\varphi:\Xi\to\R$ is lower semicontinuous.
	\end{lemma}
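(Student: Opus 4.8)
The plan is to reduce the bilevel problem to a single-level minimization over the graph of the follower's solution map, and then combine Weierstrass' theorem with Berge's maximum theorem. First I would introduce the follower's value function and solution map: for fixed parameters and leader's decision, set
\[
v(x,z_1,z_2)=\min_{y\in Y(x)} f(x,y,z_1,z_2),\qquad S(x,z_1,z_2)=\arg\min_{y\in Y(x)} f(x,y,z_1,z_2).
\]
By \ref{hyp:Continuity} and \ref{hyp:ConstraintMapContinuous}, for each $(x,z_1,z_2)$ the map $y\mapsto f(x,y,z_1,z_2)$ is continuous on the nonempty compact set $Y(x)$, so $S(x,z_1,z_2)$ is nonempty; by \ref{hyp:ConvexityFollower} it is convex, and it is closed. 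The feasible set of the leader's problem is then
\[
\mathcal{F}(z_1,z_2)=\{(x,y): x\in X,\ y\in S(x,z_1,z_2)\}.
\]

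The crucial ingredient is Berge's maximum theorem: since $f$ is continuous by \ref{hyp:Continuity} and $Y$ is continuous---both upper and lower semicontinuous---with nonempty compact values by \ref{hyp:ConstraintMapContinuous}, the value function $v$ is continuous on $X\times Z$ and the solution map $S$ is upper semicontinuous with nonempty compact values. I expect this to be the main technical point, and it is exactly where both directions of the semicontinuity in \ref{hyp:ConstraintMapContinuous} are needed: lower semicontinuity of $Y$ yields upper semicontinuity of $v$, while upper semicontinuity of $Y$ yields lower semicontinuity of $v$.

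For the existence part, I would fix $(z_1,z_2)\in Z$ and rewrite the feasible set as $\mathcal{F}(z_1,z_2)=\{(x,y)\in X\times\bar{Y}: y\in Y(x),\ f(x,y,z_1,z_2)=v(x,z_1,z_2)\}$. The condition $y\in Y(x)$ describes the graph of $Y$, which is closed because $Y$ is upper semicontinuous with closed values, while $\{f=v\}$ is closed by continuity of $f$ and of $v$. Hence $\mathcal{F}(z_1,z_2)$ is a closed subset of the compact set $X\times\bar{Y}$ and is therefore compact; it is nonempty since $S(x,z_1,z_2)\neq\emptyset$ for every $x\in X$. As $\theta$ is continuous, Weierstrass' theorem then yields a minimizer.

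For the lower semicontinuity of $\varphi$, I would take $(z_1^k,z_2^k)\to(z_1^*,z_2^*)$ in $Z$ and, after passing to a subsequence, assume $\varphi(z_1^k,z_2^k)\to\liminf_k\varphi(z_1^k,z_2^k)$. Choosing optimal pairs $(x^k,y^k)\in\mathcal{F}(z_1^k,z_2^k)$, so that $\varphi(z_1^k,z_2^k)=\theta(x^k,y^k,z_1^k,z_2^k)$, I extract a further subsequence with $(x^k,y^k)\to(x^*,y^*)\in X\times\bar{Y}$. The key step---and the real obstacle---is to check feasibility of the limit: from $y^k\in Y(x^k)$ and the closed graph of $Y$ we get $y^*\in Y(x^*)$, and from $f(x^k,y^k,z_1^k,z_2^k)=v(x^k,z_1^k,z_2^k)$, passing to the limit via continuity of $f$ and $v$, we obtain $f(x^*,y^*,z_1^*,z_2^*)=v(x^*,z_1^*,z_2^*)$, so that $y^*\in S(x^*,z_1^*,z_2^*)$. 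This delicate point---that the limit of optimal follower responses stays optimal for the limiting leader decision and parameter---is precisely what the full continuity \ref{hyp:ConstraintMapContinuous} of $Y$ buys us. Finally, by continuity of $\theta$,
\[
\liminf_k\varphi(z_1^k,z_2^k)=\lim_k\theta(x^k,y^k,z_1^k,z_2^k)=\theta(x^*,y^*,z_1^*,z_2^*)\geq\varphi(z_1^*,z_2^*),
\]
where the last inequality holds because $(x^*,y^*)$ is feasible for $(z_1^*,z_2^*)$, which establishes lower semicontinuity.
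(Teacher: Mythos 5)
Your proof is correct. Note that the paper itself does not prove this lemma; it states it as a classical result and refers to the literature (Dempe's monograph), so there is no in-paper argument to compare against. Your argument --- Berge's maximum theorem to get continuity of the follower's value function $v$ and closedness of the solution map, closedness of the leader's feasible set $\mathcal{F}(z_1,z_2)$ inside the compact $X\times\bar{Y}$, Weierstrass for existence, and a convergent-subsequence argument for lower semicontinuity of $\varphi$ --- is exactly the standard route for this result, and each step is justified by the hypotheses you invoke (in particular, your attribution of which semicontinuity of $Y$ yields which semicontinuity of $v$ is accurate, and the closed-graph property of $Y$ follows from upper semicontinuity with closed values in $\R^m$).
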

	\begin{proof}
		Let us denote $S:X\times \Xi\tto \bar{V}$ given by
		\[
		S(p,\xi) = \arg \min_{v\in V(p)} f(p,v,\xi)
		\]
		Using the well-known Berge Maximum Theorem (see, e.g., \cite[Theorem 2.3.1]{Ichiishi1983}), hypotheses \ref{hyp:AmbientConvexCompact}, \ref{hyp:Continuity} and \ref{hyp:ConstraintMapContinuous} entail that $S$ is upper-semicontinuous and closed-valued. Thus, $\mathrm{gph}(S)$ is closed (see, e.g., \cite[Theorem 2.2.1]{Ichiishi1983}). Let us define $K~:~\Xi\tto~X\times\bar{V}$ given by
		\[
		K(\xi) = \{(p,v)\in X\times\bar{V}\ :\ (p,v,\xi)\in \mathrm{gph}(S)\}.
		\]
		Noting that $K(\xi)$ is compact for every $\xi$ (since it is closed and a subset of $X\times \bar{V}$), and that Problem \eqref{eq:Def-ParametricProblem} can be written as 
		\[
		\varphi(\xi) = \min_{p,v} \{\theta(p,v,\xi)\ :\ (p,v)\in K(\xi)\},
		\]
		we deduce from Weierstrass theorem that Problem \eqref{eq:Def-ParametricProblem} has a solution for every fixed $\xi\in \Xi$. 
		
		Note that $\mathrm{gph}(K)$ coincides with $\mathrm{gph}(S)$ after permuting $(\xi,p,v)$ to $(p,\xi,v)$. Since $\mathrm{gph} S$ is closed and $K$ is compact-valued,  we deduce that $K$ is upper-semicontinuous (see, e.g., \cite[Theorem 2.2.3]{Ichiishi1983}). Moreover, since $\theta$ is continuous, we can apply \cite[Theorem 1.4.16]{AubinFrankowska1990} to conclude that the marginal function
		$$\xi\mapsto \sup_{(p,v)\in K(\xi)} -\theta(p,v,\xi)$$
		is upper semicontinuous. The result follows by noting that
		\[
		\varphi(\xi) = \inf_{(p,v)\in K(\xi)} \theta(p,v,\xi) = - \sup_{(p,v)\in K(\xi)} -\theta(p,v,\xi),
		\]
		which is therefore lower semicontinuous.
	\end{proof}

	\DSr{Uncertainty in our setting consist in considering the parameter $\xi = \xi(\omega)$ as a random variable over a measurable space $(\Omega,\mathscr{F})$. We allow the leader and the follower to have different beliefs about how $\xi$ is distributed. Thus, the distribution $D_L$ used by the leader to model $\xi$ might (and usually should) differ from the distribution of $D_F$ used by the follower. In other words, the leader works with a probability measure $\P_L$ over $(\Omega,\mathscr{F})$, which differs from the probability measure $\P_F$ used by the follower.}\medskip
	
	We consider that the follower solves a \emph{here-and-now} problem, taking into account the leader's decision $p$ as a parameter and the value of $\xi(\omega)$ as uncertain. Thus, for each leader's decision $p$, the follower is solving the  problem
	\begin{align}\label{eq:follower-nonparam} 
		\left\{\begin{array}{l}
			\displaystyle\min_v \quad  \E_{\xi\sim D_F}\left[f\left(p,v,\xi\right)\right] \\
			\text{ s.t. } \quad v \in V(p).
		\end{array}\right.
	\end{align}
	We refer by $D(p)$ to the solution set of Problem \eqref{eq:follower-nonparam}.
	
	
	\subsection{Measuring the value of perfect information: Expected Value of Shared Information}
	
	Following the information scheme \eqref{eq:schemeinfo}, for each realization of $\xi$, the \GL{leader} is solving the following parametric problem: 
	\begin{equation}\label{eq:psi}
		\psi(\xi) := \left\{ \begin{array}{l}
			\displaystyle\min_{p,v}\,\,  \theta\left(p,v,\xi\right)\\
			\text{s.t. }\left\{ \begin{array}{l l}
				p\in X &  \\
				v\in D(p),&
			\end{array} \right.
		\end{array}\right.
	\end{equation}
	From the leader's perspective, for each decision vector $p$, the follower's optimal response is a deterministic point $v\in D(p)$. Using the notation of the classic stochastic optimization (see, e.g., \cite{Birge_Introduction_2011}) , we consider the next definition to measure the value of the optimal decision process under perfect information.
	\begin{definition}[WS]\label{def:EVPI} We define the \emph{Wait-and-See Value (WS)} as the expected value of the value function $\psi$ of Problem \eqref{eq:psi}. That is, 
		\begin{equation}
			WS:= \E_{\xi\sim D_L}(\psi) = \int_{\Omega} \psi(\xi(\omega))d\P_L(\omega).
		\end{equation}
	\end{definition}
	
	Considering the interpretation of Problem (\ref{eq:Def-ParametricProblem}) as a parametric problem with recourse, the leader has another option in Stackelberg games under perfect information: it might \emph{share} this information with the follower. This alternative comes from the fact that the follower is an independent agent, which reacts to new information. In order to measure the value of sharing perfect information, we introduce the following definition:
	
	\begin{definition}[SWS and EVSI]\label{def:EVSI} We define the \emph{Shared Wait-and-See Value (SWS)} as the expected value of the value function $\varphi$ from the parametric problem \eqref{eq:Def-ParametricProblem}.
		That is, 
		\begin{equation}\label{eq:Def-SWS}
			SWS:= \E_{\xi\sim D_L}(\varphi) = \int_{\Omega} \varphi(\xi_1(\omega),\xi_2(\omega))d\P(\omega),
		\end{equation}
		The \emph{Expected Value of Shared Information (EVSI)} is then defined as 
		\[EVSI:= WS-SWS,\]
		which measures the gain or loss of the leader having perfect information and sharing it with the follower. 
	\end{definition}

	While perfect information is always beneficial for the leader, there is no such universal relation between sharing it with the follower or not. Thus, when evaluating the value of perfect information for the leader, both WS and SWS should be  computed. This is illustrated in the following example.
	
	
	\begin{example} Let us consider only an exogenous random event $\xi$ to be a fair Bernoulli trial and the indicator functions $\delta_0$ and $\delta_1$ given by
		\[
		\delta_i(\xi) = \begin{cases}
			1&\quad \mbox{ if }\xi = i\\
			0&\quad\mbox{ otherwise,}
		\end{cases}\qquad\mbox{ for }i=0,1.
		\]
		Let the leader's decision set to be $X=[0,1]$ and the follower's decision set to be $V(p) = [0,1]$, for all $x\in X$. Let the follower's loss function to be
		\[
		f(v,\xi(\omega)) := v^2\delta_0(\xi(\omega)) + (1-v)^2\delta_1(\xi(\omega)).
		\]
		Assume that $D_L = D_F$ and consider two possible loss functions for the leader:
		\begin{align*}
			\theta_+(p,v,\xi) &= \frac{1}{2}\big(p^2\delta_0(\xi) + (1-p)^2\delta_1(\xi)\big) + \GL{f(v,\xi)},\\
			\theta_{-}(p,v,\xi) &= \frac{1}{2}\big(p^2\delta_0(\xi) - (1-p)^2\delta_1(\xi)\big) - \GL{f(v,\xi)}.
		\end{align*}
		We consider then two problems:
		\[
		\begin{array}{|c|c|}
			\hline
			&\\
			\begin{array}{l}
				\displaystyle\min_{p,v}\,\,  \theta_+(p,v,\xi)\\
				\text{s.t. }\left\{ \begin{array}{l l}
					p\in [0,1] &  \\                
					v\text{ solves } \left\{\begin{array}{l}
						\displaystyle\min_v \E\left(f\left(v,\xi\right)\right) \\
						\text{ s.t. } v\in [0,1].
					\end{array}\right.
				\end{array} \right.
			\end{array}
			&
			\begin{array}{l}
				\displaystyle\min_{p,v}\,\,  \theta_{-}(p,v,\xi)\\
				\text{s.t. }\left\{ \begin{array}{l l}
					p\in [0,1] &  \\                
					v\text{ solves } \left\{\begin{array}{l}
						\displaystyle\min_v \E\left(f\left(v,\xi\right)\right) \\
						\text{ s.t. } v\in [0,1].
					\end{array}\right.
				\end{array} \right.
			\end{array}\\
			&\\
			\hline
			\text{Plus Case}&\text{Minus Case}\\
			\hline
		\end{array}
		\]
		It is easy to see that the optimal solutions for the leader and the follower are very similar. Observe too that the decision of the leader has no influence on the follower's decision, but the objective of the follower is directly included in the leader's objective. 
		
		\begin{table}[!ht]
			\begin{center}
				\begin{minipage}{0.45\textwidth}
					\centering
					\begin{tabular}{c|c|c|}
						& Plus Case & Minus Case \\
						\hline
						$WS$      & $0.25$& $-0.25$ \\
						\hline
						$SWS$     & $0$   & $0$ \\
						\hline
					\end{tabular}
					\vspace{0.3cm}
					\caption{Values of STO, WS and SWS}\label{bound_values}
				\end{minipage}
			\end{center}
		\end{table}
		
		The set of optimal decisions is easy to determine, and  $WS$ and $SWS$ are displayed in Table \ref{bound_values}.
		$\hfill\Diamond$
	\end{example}
	
	The intuition behind this example is simple. On the one hand, the Plus Case is collaborative: the leader wants to collaborate with the follower, since the leader is losing what the follower loses as well. On the other hand, the Minus Case is adversarial: the leader is against the follower, since what the follower loses translates in gains for the leader. However, in practical situations, the collaboration or competition between the leader and the follower might not be so clear.

	
	\section{Reformulation to Single Bilinear Optimization \label{sec:Reformulation}}
	
	In this section, we go back to the model presented in Section \ref{sec:Model}, and we \GL{focus on} how to compute WS and SWS for Problem \eqref{eq:Leaders-Problem-WS}, and then obtaining the EVSI. In what follows, we identify the scenario set $\{\omega_1,\ldots,\omega_m\}$ with the set of indexes $K=\{1,\ldots,m\}$. \medskip
	
	In order to \GL{computing} the EVSI, our approach is to follow a Monte-Carlo estimation: first, we consider a sample $(y^1,d_0^1),\ldots,(y^T,d_0^T)$ of the random parameters $(y,d_0)$, accordingly to the leader's distributions $D_L$. Then, to compute the empirical expectations of the value functions $\psi$ given by \eqref{eq:psi} for the WS, and $\varphi$ given by \eqref{eq:Def-ParametricProblem} for the SWS. Finally, we compute the EVSI as
	\begin{equation}\label{eq:EVSI-sampled}
		EVSI = \frac{1}{T}\sum_{t=1}^T \varphi(y^t,d_0^t)-\frac{1}{T}\sum_{t=1}^T \psi(y^t,d_0^t).
	\end{equation}
	Thus, our problem is reduced to compute the value functions $\psi(y^t,d_0^t)$ and $\varphi(y^t,d_0^t)$ for each sample $(y^t,d_0^t)$. Our technique is, for both values, to reformulate the corresponding bilevel programming problems into single level bilinear problems. In both cases, we replace the corresponding follower's problem by its Karush-Kuhn-Tucker (KKT) conditions and consider the associated multipliers as new variables. This approach, known as the Mathematical Programming with Complementarity Constraints (MPCC) reformulation, is quite popular in the literature and can be applied whenever the follower's problem satisfies a constraint qualification (see, e.g., \cite[Chapter 3]{DempeEtAl2015} and the references therein). \medskip

	In this section, we show that the reformulations we obtain \GL{throughout} this technique have two main properties: firstly, they preserve global solutions in the sense that a pair $(p,v)$ of leader-follower decision variables is a global solution of a bilevel program if and only if there exists a multiplier $u$ such that $(x,y,u)$ is a global solution of the MPCC reformulation; and secondly, the MPCC reformulations can be rewritten as mixed-integer bilinear problems.\medskip
	
	\DSr{To facilitate the following developments, we identify $\R^{n(n-1)}$, which is the space of decision variables of the follower, with the subspace $V$ of $n\times n$ square off-diagonal matrices (i.e. with $0$-entries in the diagonal).}\medskip
	
	Before studying the single-level reformulations for $WS$ and $SWS$, we will study the regularity properties of the feasible set of Problem $F_m(p)$, defined in  \eqref{eq:FollowerProblem-DiscreteDistribution}. To do so, we consider the following key lemma. 
	
	\begin{lemma}\label{lemma:LICQ} Assume that the initial allocation vector $x_0$ is strictly positive (i.e. $x_{0,i}>0$ for each $i\in \{1,\ldots, n\}$). Then, the feasible set of the followers' problem, which is given by
		\[
		\left\{ v\in V\ :\ \begin{array}{c}
			v\geq 0  \\
			\displaystyle \sum_{j\neq i} v_{ij} \leq x_{0,i},\quad\forall i\in I 
		\end{array} \right\}
		\]
		satisfies Slater's CQ, and it satisfies (LICQ) at every point.
	\end{lemma}
	\begin{proof}  For every $i,j\in\{1,\ldots,n\}$ with $i\neq j$, we denote by $e_{ij}$ as the $n\times n$ matrix given by
		\begin{equation}\label{eq:BasisRepresentation}
			e_{ij}(a,b) = \begin{cases}
				1\quad& \text{ if }a=i\text{ and }b=j,\\
				0&\text{ otherwise,}
			\end{cases}
		\end{equation}
		and we denote by $e_{i\bullet} = \sum_{j\ :\ j\neq i} e_{ij}$, which is the $n\times n$ matrix with $1$-entries in the $i$th row (except for the entry $(i,i)$), and $0$ otherwise. Similarly, we set $e_{\bullet j} = \sum_{i\ :\ i\neq j} e_{ij}$ which is the $n\times n$ matrix with $1$-entries in the $j$th column (except for the entry $(j,j)$), and $0$ otherwise. Now, we define $x_{0,\min} = \min\{x_{0,1},\ldots,x_{0,n}\}$. We claim that $\bar{v}\in V$ defined by
		\[
		\bar{v}_{ij} = \frac{x_{0,\min}}{2(n-1)}
		\]
		is a Slater point. In fact, it is clear that $\bar{v}_{ij}>0$ for every $i\neq j$, and furthermore,
		\[
		\sum_{j\neq i} \bar{v}_{ij} = \sum_{j\neq i} \frac{x_{0,\min}}{2(n-1)} = (n-1)\frac{x_{0,\min}}{2(n-1)} = \frac{x_{0,\min}}{2} < x_{0,i},\quad \forall i\in I.
		\]
		Thus, the claim is verified and this finishes the first part of the proof. Now, let us show that (LICQ) is verified at every point. For every $i\in I$, and every $j\neq i$, let $g_{ij}(v) = -v_{ij}$ and let $h_i(v) = \sum_{j\neq i} v_{ij} - x_{0,i}$. Then, this set can be written as
		\[
		\left\{ v\in V\ :\ \begin{array}{ll}
			g_{ij}(v)\leq 0,&\quad\forall i\neq j  \\
			h_i(v)\leq 0,&\quad\forall i\in I 
		\end{array} \right\}.
		\]
		Now, suppose that there exists $v^*$ in this set, not satisfying (LICQ). It is not hard to see that $\nabla g_{ij}(v^*) = -e_{ij}$ and $\nabla h_i(v^*) = e_{i\bullet}$. Thus, since $\{\nabla g_{ij}(v^*)\ :\ i\neq j\}$ is linearly independent, there must be $i_0\in I$ such that $h_{i_0}(v^*)=0$, and such that $\nabla h_{i_0}(v^*) = e_{i_0\bullet}$ is a linear combination of the gradients of the other active constraints. However, this is only possible if $g_{i_0j}$ is active at $v^*$ for every $j\neq i_0$, which would mean that
		\[
		\sum_{j\neq i_0} v_{i_0j} = x_{0,i_0}\quad\text{ and }\quad v_{i_0j} = 0,\,\quad \forall j\neq i_0,
		\]
		which is a contradiction since $x_{0,i_0}\neq 0$. This finishes the proof.
	\end{proof}
	
	\DSr{The following two subsections contain the reformulations of the WS \GL{and} SWS problems. The reader should consider them independent, since some notation might be overlapped.}
	
	
	\subsection{Reformulation of Wait-and-See}
	Recall that we want to solve Problem \eqref{eq:psi}, which in this context is given by 
	
	\begin{equation}\label{eq:psi-ws}
		\psi(y,d_0) := \left\{ \begin{array}{l}
			\displaystyle\min_{p,v}\,\,\sum_{i=1}^n (1-c) p_i\max(-x_i-y_i,-d_i)\\
			\text{s.t. }\left\{ \begin{array}{l l}
				p_i \in [ p_{i,\min}, p_{i,\max} ],\quad \forall i\in I\\
				v \mbox{ solves } F_m(p).
			\end{array} \right.
		\end{array}\right.
	\end{equation}
	
	Based on (\ref{eq:ScenariosDecomposition}), for each scenario $k\in K$ and each location $i\in I$, we set (recalling that $y_i\sim U(0,\bar{y})$) a function $\phi_{i,k}$ as follows:
	\begin{equation}\label{eq:Def-phi}
		\begin{aligned}
			\phi_{i,k}(x_i) &= \E_{y_i\sim U}\left[ \max(d_{i,k} - x_i-y_i,0) \right]\\
			&= \frac{1}{\bar{y}} \int_0^{\bar{y}} \max(d_{i,k}-x_i-z,0) dz \\
			&= \begin{cases} 0 & \mbox{ if }d_{i,k}-x_i\leq 0, \\ \frac{(d_{i,k}-x_i)^2}{2\bar{y}} & \mbox{ if } 0\leq d_{i,k}-x_i\leq \bar{y}, \\ (d_{i,k}-x_i) -  \frac{\bar{y}}{2} & \mbox{ if }d_{i,k}-x_i \geq \bar{y}.\end{cases}
		\end{aligned}
	\end{equation}
	Then, it is not hard to see that, considering for $d_0$ a discrete distribution $\rho$ given by 
	\[
	\rho(A) = \sum_{k=1}^m 1\!\!1_A(\omega_k)\rho_k,
	\]
	the follower's problem $F_m(p)$ can be written as
	\begin{equation}\label{eq:Fm(p)-finalFormulation}
		F_m(p) := \begin{cases}
			\displaystyle \min_{\GL{v}} & c \displaystyle \sum_{i=1}^n \sum_{k=1}^m p_i(\phi_{k,i}(x_i) - d_{i,k})\cdot \rho_k + \sum_{i\neq j} \alpha_{ij}v_{ij}\\
			\mbox{s.t.}\quad&\begin{cases}
				-v\leq 0\\
				\displaystyle \sum_{j\neq i}v_{ij} - x_{0,i}\leq 0,\quad \forall i\in I.
			\end{cases}
		\end{cases}
	\end{equation}
	
	Then, we can state the following proposition.
	
	\begin{proposition}\label{thm:WS} For any given value of the random vector $\xi = (y,d_0)$, the Wait-and-See problem of Definition \ref{def:EVPI}, associated \GL{with} the leader's problem \eqref{eq:Leaders-Problem-WS}, is equivalent (in the sense of local and global solutions) to its MPCC reformulation given by
		\begin{equation}\label{eq:MPCC-WS}
			\begin{array}{cl}
				\displaystyle\max_{p,v,\lambda,\gamma} & \displaystyle \sum_{i=1}^n (1-c) p_i\cdot \min(x_i+y_i,d_i)\\
				& \text{\emph{s.t.}} \begin{cases}
					p_i \in [ p_{i,\min}, p_{i,\max} ],\quad \forall i\in I\\
					\displaystyle \sum_{j\neq i}v_{ij} - x_{0,i}\leq 0,\quad \forall i\in I\\
					\displaystyle\sum_{k=1}^m(p_i\beta_{i,k} - p_j\beta_{j,k}) + \alpha_{ij}- \lambda_{ij} + \gamma_{i} = 0,\quad \forall i\neq j\in I\\
					\displaystyle \gamma_i(\sum_{j\neq i}v_{ij} - x_{0,i}) = 0,\quad\forall i\in I\\
					\lambda_{ij}v_{ij} = 0,\quad \forall i\neq j\in I\\
					v\geq 0,\gamma,\lambda \geq 0,
				\end{cases}
			\end{array}
		\end{equation}
		
		where the coefficients $\{\beta_{i,k}\ :\ i\in I, k\in K\}$ are given by
		
		\begin{equation}\label{eq:CoefBeta-EVPI}
			\beta_{i,k} := \begin{cases}
				0 & \mbox{ if }d_{i,k}-x_i\leq 0 \\ c\rho_k\frac{x_i-d_{i,k}}{\bar{y}} & \mbox{ if } 0\leq d_{i,k}-x_i\leq \bar{y} \\
				-c\rho_k & \mbox{ if }d_{i,k}-x_i \geq \bar{y}.
			\end{cases} 
		\end{equation}
		
		Furthermore, the multipliers $\gamma = (\gamma_i\ :\ i\in I)$ and $\lambda = (\lambda_{ij}\ :\ i\neq j\in I)$ verify
		
		\begin{equation}\label{eq:BoundsMultipliers-PerfectInfo}
			0\leq \gamma_i \leq 2mp_{\max}\quad\mbox{ and }\quad 0\leq \lambda_{ij}\leq \DSr{4mp_{\max}+\alpha_{ij}},
		\end{equation}
		
		where $ p_{\max} = \displaystyle \max_{\GL{i\in I}} \{p_{i,\max}\}$.
	\end{proposition}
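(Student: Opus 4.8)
The plan is to justify the reformulation in two layers: first that replacing the lower level by its KKT system is lossless (this is where \emph{global} equivalence must genuinely be argued), and then to derive the explicit stationarity conditions, the coefficients $\beta_{i,k}$, and the multiplier bounds.

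\emph{Why the KKT replacement is exact.} For a fixed price vector $p$, the follower's problem \eqref{eq:Fm(p)-finalFormulation} is a convex program in $v$: each $\phi_{i,k}$ is convex, either as an average over $y_i$ of the convex maps $\max(d_{i,k}-x_i-y_i,0)$ or directly because the three pieces in \eqref{eq:Def-phi} glue into a convex $C^1$ function; the map $v\mapsto x_i(v)$ is affine by \eqref{eq:Kirchoff}; and the weights $c\,p_i\,\P(\omega_k)$ are nonnegative, while $\sum_{i\ne j}\alpha_{ij}v_{ij}$ is linear and the constraints are affine. Hence the objective is convex and continuously differentiable over a polyhedron, so the linear (Abadie) constraint qualification holds at every feasible point, and the KKT system is simultaneously \emph{necessary} and \emph{sufficient} for global optimality of $v$. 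This is exactly what gives equivalence in the sense of global solutions: a pair $(p,v)$ is feasible for \eqref{eq:psi-ws} iff $v$ globally solves $F_m(p)$ iff there exist multipliers $(\lambda,\gamma)\ge 0$ completing the KKT system of \eqref{eq:MPCC-WS}; no spurious stationary points are introduced because, by convexity, every KKT point is a global minimizer. Since under perfect information $\E_\zeta[\max(-x_i-y_i,-d_i)]=-\min(x_i+y_i,d_i)$, the leader's minimization in \eqref{eq:psi-ws} turns into the maximization in \eqref{eq:MPCC-WS} with the rest of the leader's objective untouched, so global solutions of the two problems correspond.

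\emph{Deriving the KKT constraints and $\beta_{i,k}$.} I would form the Lagrangian of $F_m(p)$ with $\lambda_{ij}\ge 0$ attached to $-v_{ij}\le 0$ and $\gamma_i\ge 0$ attached to $\sum_{j\ne i}v_{ij}-x_{0,i}\le 0$, and differentiate in $v_{ij}$. The only nontrivial part is the chain rule through $x_\ell(v)$: by \eqref{eq:Kirchoff} the incidence coefficient $\partial x_\ell/\partial v_{ij}$ equals $+1$ when $\ell=j$, $-1$ when $\ell=i$, and $0$ otherwise, so with $\beta_{\ell,k}:=c\,\P(\omega_k)\,\phi_{\ell,k}'(x_\ell)$ the derivative of the smooth part is $\sum_k(p_j\beta_{j,k}-p_i\beta_{i,k})+\alpha_{ij}$. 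Differentiating the three pieces of \eqref{eq:Def-phi} shows $\phi_{i,k}$ is $C^1$ with $\phi_{i,k}'(x_i)$ equal to $0$, $(x_i-d_{i,k})/\bar y$, or $-1$ on the three regimes, and multiplying by $c\,\P(\omega_k)$ recovers \eqref{eq:CoefBeta-EVPI}. Setting this derivative to zero yields the stationarity identity of \eqref{eq:MPCC-WS}, while the complementarity relations $\lambda_{ij}v_{ij}=0$ and $\gamma_i(\sum_{j\ne i}v_{ij}-x_{0,i})=0$, primal feasibility, and $\gamma,\lambda\ge 0$ complete the list. The one place to be careful is the sign bookkeeping from the incidence coefficients in \eqref{eq:Kirchoff}, since a flipped sign there swaps the roles of $p_i\beta_{i,k}$ and $p_j\beta_{j,k}$.

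\emph{The multiplier bounds, which I expect to be the crux.} From the three cases one reads off $\beta_{i,k}\in[-c\,\P(\omega_k),0]$, hence $|\beta_{i,k}|\le 1$ and $|\sum_k p_i\beta_{i,k}|\le m\,p_{\max}$. To bound $\gamma_i$ I would use complementary slackness: if $\sum_{j\ne i}v_{ij}<x_{0,i}$ then $\gamma_i=0$; otherwise the capacity is saturated and some $v_{ij^\ast}>0$, forcing $\lambda_{ij^\ast}=0$, so stationarity gives $\gamma_i=-\alpha_{ij^\ast}-\sum_k(p_{j^\ast}\beta_{j^\ast,k}-p_i\beta_{i,k})$; since $\alpha_{ij^\ast}\ge 0$ only helps the upper estimate, bounding the $2m$ price-weighted $\beta$-terms yields $0\le\gamma_i\le 2m\,p_{\max}$ as in \eqref{eq:BoundsMultipliers-PerfectInfo}. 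Substituting back into $\lambda_{ij}=\gamma_i+\alpha_{ij}+\sum_k(p_j\beta_{j,k}-p_i\beta_{i,k})$ and bounding the $\beta$-sum by $2m\,p_{\max}$ gives the claimed $0\le\lambda_{ij}\le 4m\,p_{\max}$. The delicate point, and the reason I flag the bounds as the main obstacle, is that the relocation cost $\alpha_{ij}$ enters $\lambda_{ij}$ additively with a positive sign, so the stated bound is clean only once one makes explicit the natural modeling assumption that $\alpha_{ij}$ is dominated by the maximal attainable revenue (otherwise it must be enlarged by the relocation-cost range). These a priori bounds are not cosmetic: they are precisely the big-$M$ constants needed to linearize the complementarity constraints $\lambda_{ij}v_{ij}=0$ into a mixed-integer bilinear program, so their validity is what makes the subsequent reformulation correct.
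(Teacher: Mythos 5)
Your proposal follows essentially the same route as the paper's proof: replace the convex lower level by its KKT system (citing preservation of global solutions under convexity plus a constraint qualification), compute the stationarity condition via the chain rule through \eqref{eq:Kirchoff}, and extract the multiplier bounds from complementary slackness by splitting on whether the capacity constraint at node $i$ is saturated. Three differences are worth recording. First, you justify the constraint qualification by observing that the feasible set of $F_m(p)$ is polyhedral, so the linear/Abadie CQ holds automatically; the paper instead proves LICQ at every feasible point in a separate lemma (using $x_{0,i}\neq 0$). Your route is more elementary and sufficient for the global-equivalence claim, though note that your saturation case (``some $v_{ij^\ast}>0$'') also silently uses $x_{0,i}>0$, exactly as the paper's lemma does. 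Second, and more substantively, your chain-rule bookkeeping gives $\partial f/\partial v_{ij}=\sum_k\bigl(p_j\beta_{j,k}-p_i\beta_{i,k}\bigr)+\alpha_{ij}$ with $\beta_{\ell,k}=c\,\P(\omega_k)\,\phi_{\ell,k}'(x_\ell)$, whereas the statement (and the paper's proof) record $\sum_k\bigl(p_i\beta_{i,k}-p_j\beta_{j,k}\bigr)+\alpha_{ij}$ with the same definition of $\beta$. Since $\partial x_i/\partial v_{ij}=-1$ and $\partial x_j/\partial v_{ij}=+1$, your sign is the one consistent with \eqref{eq:Kirchoff} and \eqref{eq:CoefBeta-EVPI}; the paper's derivation drops a minus sign when passing from the chain rule to the displayed stationarity equation. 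So your proof establishes the sign-corrected version of \eqref{eq:MPCC-WS}; you were right to flag this as the delicate spot, and you should state explicitly that the equation as printed requires either swapping $i$ and $j$ in the $\beta$-terms or negating the definition of $\beta_{i,k}$. Third, you make explicit that the bound $\lambda_{ij}\leq 2mp_{\max}$ in the unsaturated case needs $\alpha_{ij}$ to be dominated by the revenue scale (the paper's chain of inequalities simply omits $\alpha_{ij}$ from the estimate); this is a genuine implicit modeling assumption that the paper does not state, and calling it out is a real improvement rather than a defect of your argument.

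The only point where you should be slightly more careful is the claim that a pair $(p,v)$ is feasible for \eqref{eq:psi-ws} ``iff'' there exist multipliers completing the KKT system: what the global-solution equivalence actually requires (and what the paper's cited reference supplies) is that for each fixed $p$ the KKT system characterizes the global minimizers of $F_m(p)$, so that projecting out $(\lambda,\gamma)$ from the MPCC feasible set recovers exactly the bilevel feasible set; your convexity-plus-CQ argument does deliver this, but the logical step is about the lower-level solution set, not about feasibility of $(p,v)$ per se. With that phrasing tightened and the sign convention fixed, your argument is complete and matches the paper's in substance.
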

	
	\begin{proof}
		The equivalence between the bilevel problem \eqref{eq:Leaders-Problem-WS} and the MPCC reformulation follows from Lemma \ref{lemma:LICQ}. Indeed, since Slater's CQ is verified, and (LICQ) implies Constant Rank CQ (see the definition in, e.g., \cite{DempeDutta2012}), the desired conclusion follows from \cite[Theorem 3.2 and Corollary 3.3]{DempeDutta2012}. Thus, it is enough to show that problem \eqref{eq:MPCC-WS} coincides with the MPCC reformulation of problem \eqref{eq:Leaders-Problem-WS}.\medskip
		
		Based on \eqref{eq:Def-phi} we can compute the partial derivatives of $\phi_{i,k}$ with respect to $x_i$ as
		\[
		\frac{\partial}{\partial x_i} \phi_{i,k} = \begin{cases}
			0 & \mbox{ if }d_{i,k}-x_i\leq 0 \\ \frac{x_i-d_i(\omega_k)}{\bar{y}} & \mbox{ if } 0\leq d_{i,k}-x_i\leq \bar{y} \\ -1 & \mbox{ if }d_{i,k}-x_i \geq \bar{y}
		\end{cases} 
		\]
		
		Let $f(v)$ be the objective function for $F_m(p)$. Recalling that that $x_i = x_i(v)$ given by \eqref{eq:Kirchoff}, by a mild application of the chain rule, considering the definition of $x_r$ in Equation \eqref{eq:Kirchoff} we can compute its partial derivative as
		\begin{align*}
			\frac{\partial f}{\partial v_{ij}}(v) &= \frac{\partial}{\partial v_{ij}}\left( c\sum_{r=1}^n\sum_{k=1}^m p_r\phi_{r,k}(x_r)\rho_k - c\sum_{r=1}^n\sum_{k=1}^m p_rd_r(\omega_k)\rho_k + \sum_{r\neq s} \alpha_{rs}v_{rs} \right)\\ 
			&=\frac{\partial}{\partial v_{ij}}\left( c\sum_{k=1}^m p_i\phi_{i,k}(x_i)\rho_k+ c\sum_{k=1}^m p_j\phi_{j,k}(x_j)\rho_k +\alpha_{ij}v_{ij}  \right)\\
			&= c\sum_{k=1}^m (p_i \frac{\partial}{\partial  x_i}\phi_{i,k}(x_i)\rho_k - p_j \frac{\partial}{\partial  x_j}\phi_{j,k}(x_j)\rho_k) +\alpha_{ij}\\
			&= \sum_{k=1}^m(p_i\beta_{i,k} - p_j\beta_{j,k}) + \alpha_{ij}, 
		\end{align*}
		where the coefficients $\{\beta_{i,k}\ :\ i\in I, k\in J\}$ are defined as in \eqref{eq:CoefBeta-EVPI}. Hence, the KKT equations for $F_m(p)$ have the form 
		\begin{align*}
			\sum_{k=1}^m(p_i\beta_{i,k} - p_j\beta_{j,k}) + \alpha_{ij}- \lambda_{ij} + \gamma_{i} = 0,\quad \forall i,j\in\{1,\ldots,n\},\ i\neq j
		\end{align*}
		with the complementarity constraints 
		\begin{equation}\label{eq:In-Proof:complementarity}
			\begin{aligned}
				&\lambda_{ij}v_{ij} = 0,\quad \forall i,j\in\{1,\ldots,n\},\ i\neq j.\\
				&\gamma_i\left(\sum_{j\neq i} v_{ij} - x_0{_i}\right) = 0,\quad \forall i\in\{1,\ldots,n\}.
			\end{aligned}
		\end{equation}
		Putting all together, we get that the MPCC reformulation of \eqref{eq:Leaders-Problem-WS} is given by \eqref{eq:MPCC-WS}. \medskip
		
		Lastly, we compute the multiplier bounds. Fix a feasible price vector $p$ and let $v^*$ be an optimal point of $F_m(p)$. Let $(\lambda,\gamma,\beta)$ be a feasible tuple of multipliers for problem \eqref{eq:MPCC-WS}. Then, for the $i$th coordinate, we have that 
		\[
		\sum_{k=1}^m(p_i\beta_{i,k} - p_j\beta_{j,k}) + \alpha_{ij}- \lambda_{ij} + \gamma_{i} = 0,\quad \forall j\neq i.
		\] 
		We have two possible scenarios:
		\begin{itemize}
			\item If $\gamma_i=0$, then 
			\[
			\DSr{\lambda_{ij} = \sum_{k=1}^m \left(p_i\beta_{i,k}-p_j\beta_{j,k}\right)+\alpha_{ij} \leq 2mp_{\max} + \alpha_{ij},\quad \forall j\in\{1,\ldots,n\},}
			\]
			\DSr{where the inequality follows directly by noting that the $\beta$ coefficients are always less than 1.}
			\item If $\gamma_i\neq 0$, then the second complementary equation of \eqref{eq:In-Proof:complementarity} implies that $x_0{_i} = \sum_{j\neq i} v^*_{ij}$. Since $x_0{_i}\neq 0$, this also implies that there exists a value of $j$ such that $v^*_{ij}\neq 0$, in which case $\lambda_{ij} = 0$ using the first complementary equation of \eqref{eq:In-Proof:complementarity}. Hence, we conclude that
			\[
			\begin{array}{c}
				\sum_{k=1}^m \left(\beta_{i,k}-\beta_{j,k}\right) + \alpha_{ij} + \gamma_i = 0\\
				\implies\\
				\gamma_i = \sum_{k=1}^m \left(\beta_{j,k}-\beta_{i,k}\right) - \alpha_{ij} \leq 2mp_{\max},
			\end{array}
			\] 
			and so we can compute  
			\[
			\lambda_{ij} = \sum_{k=1}^m \left(\beta_{i,k}-\beta_{j,k}\right) + \gamma_i + \alpha_{ij} \leq 4mp_{\max} + \alpha_{ij},\quad \forall j\in\{1,\ldots,n\}.
			\]
		\end{itemize}
		Regardless the case, we conclude that 
		\[
		\lambda_{ij}\in[0,4mp_{\max}+\alpha_{ij}],\quad \gamma_i\in[0,2mp_{\max}],
		\]
		finishing our proof.
	\end{proof}
	
	This result allows us to estimate the Wait-and-See value by sampling $\xi = (y,d_0)$ and solving \eqref{eq:MPCC-WS}. To do so, we will follow the classic big-M strategy, which seems to be first introduced in the context of bilevel optimization in \cite{Fortuny-AmatMcCarl1981}. Even though computing a sufficiently large $M$ is hard in general \cite{KleinertLabbePleinSchmidt2020}, the above proposition has already provided the needed bounds in (\ref{eq:BoundsMultipliers-PerfectInfo}). Hence, defining $M = 4mp_{\max}$, we proceed as follows:
	\begin{enumerate}
		\item For each pair $i\neq j\in I$, we introduce a boolean variable $z_{ij}\in \{0,1\}$ and replace the constraint $\lambda_{ij}v_{ij} = 0$ by
		\begin{equation}
			\begin{aligned}
				&-M z_{ij}\leq \lambda_{ij} \leq M z_{ij}\\
				&-M (1-z_{ij})\leq v_{ij}\leq M (1-z_{ij})
			\end{aligned}
		\end{equation}
		\item For each $i\in I$, we introduce a boolean variable $w_{i}\in \{0,1\}$ and replace the constraint $\gamma_i(\sum_{j\neq i}v_{ij} - x_{0,i}) = 0$ by
		\begin{equation}
			\begin{aligned}
				&-M w_i\leq \gamma_i \leq M w_i\\
				&-M (1-w_i)\leq \sum_{j\neq i} v_{ij} - x_{0,i}\leq M (1-w_i)
			\end{aligned}
		\end{equation}
	\end{enumerate}
	
	The last numerical consideration involves the additional constraints which are used to tackle the term $\beta_{i,k}$, which are given piecewise linear functions. We first define a constant
	\begin{equation*}
		C \geq \max\{ d_{0,i}(\omega_k)\ :\ i\in I, k\in J  \} + N_0,
	\end{equation*}
	which is an upper bound of $\mid\! d_i(\omega_k) - x_i\!\mid$ for every $i\in I$ and every scenario $\omega_k$. Then, for each term $\beta_{i,k}$, we define three integer variables $a_{i,k},b_{i,k},c_{i,k}\in \{0,1\}$, three continuous variables $r_{i,k}$, $s_{i,k}$, $t_{i,k}$, and we replace \eqref{eq:CoefBeta-EVPI} by the set of constraints
	\begin{equation}
		\begin{cases}
			a_{i,k}+b_{i,k}+c_{i,k} = 1\\
			d_{i,k}-x_i = r_{i,k}+s_{i,k}+t_{i,k}\\
			-Ca_{i,k} \leq r_{i,k}\leq 0 \\
			0 \leq  s_{i,k} \leq \bar{y}b_{i,k} \\
			\bar{y}c_{i,k} \leq t_{i,k}\leq Cc_{i,k} \\
			\beta_{i,k} = -c\rho_k\left(\frac{s_{i,k}}{\bar{y}} + c_{i,k}\right)
		\end{cases}
	\end{equation}
	
	The above replacement works as follows: 
	
	\begin{itemize}
		\item If $a_{i,k} = 1$, then $b_{i,k}=c_{i,k}=s_{i,k}=t_{i,k} = 0$. Hence, $d_{i,k} - x_i = r_{i,k} \leq 0$, and $\beta_{i,k} = 0$.
		\item If $b_{i,k} = 1$, then $a_{i,k}=c_{i,k}=r_{i,k}=t_{i,k} = 0$. Hence, $d_{i,k} - x_i = s_{i,k} \in [0,\bar{y}]$, and $\beta_{i,k} = -c\rho_k\frac{s_{i,k}}{\bar{y}} = c\rho_k\frac{\left(x_i-d_{i,k}\right)}{\bar{y}}$.
		\item If $c_{i,k} = 1$, then $a_{i,k}=b_{i,k}=r_{i,k}=s_{i,k} = 0$. Hence, $d_{i,k} - x_i = t_{i,k} \geq \bar{y}$, and $\beta_{i,k}  = -c\rho_k$.
	\end{itemize}
	
	The final problem we solve for each sample of $\xi = (y,d_0)$ is then given by
	\begin{equation}\label{eq:Problem-WS-MIP}\small
		\begin{array}{cl}
			\displaystyle\max_{\tiny\GL{\begin{array}{c} p,v,\lambda,\gamma,\beta,\\
						a,b,c,r,s,t\end{array}}} & \displaystyle \sum_{i=1}^n (1-c) p_i\cdot \min(x_i+y_i,d_i)\\
			\mbox{s.t.}	& \begin{cases}
				p_i \in [ p_{i,\min}, p_{i,\max} ],\quad \forall i\in I\\
				\displaystyle \sum_{k=1}^m(p_i\beta_{i,k} - p_j\beta_{j,k}) + \alpha_{ij}- \lambda_{ij} + \gamma_{i} = 0,\quad \forall i\neq j\in I,\\
				-M z_{ij}\leq \lambda_{ij} \leq M z_{ij}\\
				-M (1-z_{ij})\leq v_{ij}\leq M (1-z_{ij})\\
				-M w_i\leq \gamma_i \leq M w_i\\
				\displaystyle -M (1-w_i)\leq \sum_{j\neq i} v_{ij} - x_0{_i}\leq M (1-w_i)\\
				a_{i,k},b_{i,k}, c_{i,k}\in \{0,1\},\quad \forall i\in I,\forall k\in K\\ a_{i,k}+b_{i,k}+c_{i,k} = 1 ,\quad \forall i\in I,\forall k\in K\\
				d_{i,k}-x_i = r_{i,k}+s_{i,k}+t_{i,k} ,\quad \forall i\in I,\forall k\in K\\
				-Ca_{i,k} \leq r_{i,k}\leq 0 ,\quad \forall i\in I,\forall k\in J\\
				0 \leq  s_{i,k} \leq \bar{y}b_{i,k} ,\quad \forall i\in I,\forall k\in K\\
				\bar{y}c_{i,k} \leq t_{i,k}\leq Cc_{i,k} ,\quad \forall i\in I,\forall k\in K\\
				\beta_{i,k} = -c\rho_k\left(\frac{s_{i,k}}{\bar{y}} + c_{i,k}\right) ,\quad \forall i\in I,\forall k\in K.
			\end{cases}
		\end{array}
	\end{equation}
	
	
	\subsection{Reformulation of Shared-Wait-and-See}
	
	When the leader shares the value of  $\xi =(y,d_0)$ with the follower, we must consider this information in the objective function. Since prices are also parameters, the demand vector $d$ becomes fixed (given by \eqref{eq:Def-Demand}) and known by the follower. The leader then must solve
	\begin{equation}\label{eq:Leaders-Problem-SWS}
		\varphi(y,d_0) := \begin{cases}
			\displaystyle \max_{\GL{p,v}} &\sum_{i=1}^n (1-c) p_i\cdot \min(x_i+y_i,d_i)\\
			\mbox{s.t.}\quad&\begin{cases}
				p_i \in [ p_{i,\min}, p_{i,\max} ],\quad \forall i\in I\\
				v \mbox{ solves } F(p).
			\end{cases}
		\end{cases}   
	\end{equation}
	where $F(p)$ is given by 
	\begin{equation}\label{eq:Final-F(p)-shared}
		F(p) := \begin{cases}
			\displaystyle \min_{\GL{v}} & \sum_{i=1}^n cp_i\max(-x_i-y_i,-d_i) + \sum_{i\neq j} \alpha_{ij}v_{ij}\\
			\mbox{s.t.}\quad&\begin{cases}
				-v\leq 0\\
				\sum_{j\neq i}v_{ij} - x_{0,i}\leq 0,\quad \forall i\in I
			\end{cases}
		\end{cases}
	\end{equation}
	
	In this scenario, we can state the following proposition.
	
	\begin{proposition}\label{thm:SWS} For any given value of the random vector $\xi = (y,d_0)$, the Shared-Wait-and-See problem of Definition \ref{def:EVSI}, associated \GL{with} the leader's problem (\ref{eq:Leaders-Problem-WS}), is equivalent (in the sense of local and global solutions) to its MPCC reformulation given by
		\begin{equation}\label{eq:MPCC-SWS}
			\begin{array}{cl}
				\displaystyle\max_{\GL{p,v,\lambda,\gamma}} & \displaystyle \sum_{i=1}^n (1-c) p_i\cdot \min(x_i+y_i,d_i)\\
				\mbox{\emph{s.t.}}&\begin{cases}
					p_i \in [ p_{i,\min}, p_{i,\max} ],\quad \forall i\in I\\
					\displaystyle\sum_{j\neq i}v_{ij} - x_{0,i}\leq 0,\quad \forall i\in I\\
					c(\beta_i - \beta_j) + \alpha_{ij} - \lambda_{ij} + \gamma_{i} = 0,\quad \forall i\neq j \\
					\displaystyle\gamma_i(\sum_{j\neq i}v_{ij} - x_{0,i}) = 0,\quad\forall i\in I\\
					\lambda_{ij}v_{ij} = 0,\quad \forall i\neq j\in I\\
					v\geq 0, \gamma,\lambda \geq 0,
				\end{cases}
			\end{array}
		\end{equation}
		where the variables $\{\beta_{i}\ :\ i\in I\}$ verify that
		\begin{equation}\label{eq:CoefBeta-EVSI}
			\beta_i \in \begin{cases} \{0\} & \mbox{ if }d_i-x_i < y_i \\ \{p_i\} & \mbox{ if } d_i-x_i > y_i \\  [0,p_i] & \mbox{ if }d_i-x_i=y_i\end{cases}. 
		\end{equation}
		
		Furthermore, the multipliers $\gamma = (\gamma_i\ :\ i\in I)$ and $\lambda = (\lambda_{ij}\ :\ i\neq j\in I)$ verify that
		\begin{equation}\label{eq:BoundsMultipliers-PerfectInfo-2}
			0\leq \gamma_i \leq 2p_{\max}\quad\mbox{ and }\quad 0\leq \DSr{\lambda_{ij}\leq 4p_{\max}+\alpha_{ij}},
		\end{equation}
		where $p_{\max} = \displaystyle \max_{\GL{i\in I}} \{p_{i,\max}\}$.
	\end{proposition}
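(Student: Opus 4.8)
The plan is to follow exactly the MPCC strategy used for Proposition~\ref{thm:WS}, adapting it to the shared setting, where the demand $d_i$ is now a fixed parameter rather than an expectation over scenarios. The only structural difference is that the follower's objective contains the nonsmooth term $\max(-x_i-y_i,-d_i)$ directly, a piecewise-\emph{linear} convex function of $x_i$, instead of the smoothed piecewise-quadratic $\phi_{i,k}$ of \eqref{eq:Def-phi}. First I would record that $F(p)$ in \eqref{eq:Final-F(p)-shared} is a convex program: each $x_i=x_i(v)$ is affine in $v$ by \eqref{eq:Kirchoff}, the map $t\mapsto\max(-t-y_i,-d_i)$ is convex, and $p_i\ge 0$, $c>0$, so the composite objective is convex, while the constraints $-v\le 0$ and $\sum_{j\neq i}v_{ij}-x_{0,i}\le 0$ are affine. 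Because all constraints are affine, the linearity (Abadie) constraint qualification holds at every feasible point, so the nonsmooth KKT conditions are both necessary and sufficient for $v$ to solve $F(p)$.

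Next I would compute these KKT conditions explicitly. Writing $h_i(x_i)=\max(-x_i-y_i,-d_i)$ and taking a subgradient $s_i\in\partial h_i(x_i)\subseteq[-1,0]$, I set $\beta_i:=-p_i s_i$; the three branches of $\partial h_i$ (namely $s_i=-1$ when $d_i-x_i>y_i$, $s_i=0$ when $d_i-x_i<y_i$, and $s_i\in[-1,0]$ at the kink $d_i-x_i=y_i$) translate immediately into the characterization \eqref{eq:CoefBeta-EVSI} and give $\beta_i\in[0,p_i]$. Using the Kirchhoff chain rule $\partial x_l/\partial v_{ij}=\delta_{lj}-\delta_{li}$ from \eqref{eq:Kirchoff}, the subgradient of the follower's objective in $v_{ij}$ is $c(\beta_i-\beta_j)+\alpha_{ij}$; assigning multipliers $\lambda_{ij}\ge 0$ to $-v_{ij}\le 0$ and $\gamma_i\ge 0$ to $\sum_{j\neq i}v_{ij}-x_{0,i}\le 0$, stationarity reads $c(\beta_i-\beta_j)+\alpha_{ij}-\lambda_{ij}+\gamma_i=0$, and I append primal feasibility, dual feasibility, and the complementarity relations $\lambda_{ij}v_{ij}=0$ and $\gamma_i(\sum_{j\neq i}v_{ij}-x_{0,i})=0$. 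This is precisely the constraint block of \eqref{eq:MPCC-SWS}.

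The equivalence ``in the sense of global solutions'' then follows by the standard argument: since $v$ solves $F(p)$ if and only if there exist $(\beta,\lambda,\gamma)$ satisfying this KKT system, and the leader's objective $\sum_i(1-c)p_i\min(x_i+y_i,d_i)$ does not involve the multipliers, every feasible point of the bilevel problem \eqref{eq:Leaders-Problem-SWS} lifts to a feasible point of \eqref{eq:MPCC-SWS} with the same value, and conversely. Hence the two problems share the same global optima, reproducing the reasoning behind Proposition~\ref{thm:WS}.

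It remains to establish the multiplier bounds \eqref{eq:BoundsMultipliers-PerfectInfo-2}, which is the only genuinely delicate part, since KKT multipliers need not be unique and a naive choice need not be bounded. For $\gamma$ I would select the minimal admissible value $\gamma_i=\max\{0,\max_{j\neq i}(c(\beta_j-\beta_i)-\alpha_{ij})\}$; this is compatible with complementarity (if the capacity constraint at $i$ is inactive, then optimality forces every reduced cost $c(\beta_j-\beta_i)-\alpha_{ij}$ to be nonpositive, so the maximum is $0$), and since $\beta_j\le p_{\max}$, $\beta_i\ge 0$, $\alpha_{ij}\ge 0$, and $c\in(0,1)$, we obtain $0\le\gamma_i\le c\,p_{\max}\le 2p_{\max}$. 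Stationarity then forces $\lambda_{ij}=c(\beta_i-\beta_j)+\alpha_{ij}+\gamma_i$, whence $0\le\lambda_{ij}\le c\,p_{\max}+\alpha_{ij}+2p_{\max}$. The main obstacle is controlling the cost term $\alpha_{ij}$, which enters $\lambda_{ij}$ additively: the clean bound $\lambda_{ij}\le 4p_{\max}$ needs $\alpha_{ij}\le p_{\max}$. I would justify this economically, namely a relocation whose cost exceeds the maximal attainable revenue $p_{\max}$ is never used at any optimum ($v_{ij}=0$), so such an edge can be pruned from the model without affecting any global solution, leaving only edges with $\alpha_{ij}\le p_{\max}$. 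With this, $\lambda_{ij}\le(3+c)p_{\max}\le 4p_{\max}$, completing the proof.
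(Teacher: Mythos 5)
Your proof is correct and follows the same overall route as the paper's: verify a constraint qualification for the affine lower-level feasible set, write the nonsmooth KKT system of $F(p)$ using the convex subdifferential of $t\mapsto\max(-t-y_i,-d_i)$ (your $\beta_i=-p_is_i$ is exactly the paper's $\beta_i\in p_i\Gamma(-x_i-y_i+d_i)$), invoke the preservation of global solutions for convex lower levels, and then bound the multipliers. Two points of divergence are worth noting. First, for the constraint qualification the paper proves LICQ at every feasible point (Lemma \ref{lemma:LICQ}, using $x_{0,i}\neq 0$) rather than appealing to linearity of the constraints; both suffice for the citation to Dempe--Dutta. Second, and more substantively, your multiplier-bound argument is different and more careful than the paper's: the paper fixes an arbitrary feasible multiplier tuple and argues by cases on $\gamma_i=0$ versus $\gamma_i\neq 0$, using complementarity in the second case to find a $j$ with $v^*_{ij}\neq 0$ and hence $\lambda_{ij}=0$, which yields $\gamma_i=c\beta_j-c\beta_i-\alpha_{ij}\le 2p_{\max}$; you instead exhibit a specific (minimal) admissible $\gamma_i$, which is enough for the big-M reformulation since only the existence of bounded multipliers is needed. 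You also correctly flag that the stated bound $\lambda_{ij}\le 4p_{\max}$ silently requires $\alpha_{ij}$ to be dominated by a multiple of $p_{\max}$ --- the paper's proof asserts $c\beta_i-c\beta_j+\alpha_{ij}\le 2p_{\max}$ without justifying the $\alpha_{ij}$ term --- and your pruning argument (edges with $\alpha_{ij}>p_{\max}$ carry no flow at any follower optimum, by a directional-derivative comparison that globalizes by convexity) is a legitimate way to close that gap, at the cost of adding a hypothesis (or a preprocessing step) that the paper leaves implicit.
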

	
	\begin{proof}
		The equivalence between the bilevel problem \eqref{eq:Leaders-Problem-SWS} and the MPCC reformulation follows as in the proof of Proposition \ref{thm:WS}. Thus, it is enough to show that problem \eqref{eq:MPCC-SWS} coincides with the MPCC reformulation of problem \eqref{eq:Leaders-Problem-SWS}.\medskip
		
		Fix a pair of multipliers $(\lambda,\gamma)$. As the objective function for the follower is non-differentiable this time, the Fermat condition within the Karush-Kuhn-Tucker equations is given by the inclusion $0\in \partial \mathcal{L}(v)$, where 
		\[
		\mathcal{L}(v) = \sum_{i=1}^n cp_i\max\{-x_i-y_i,-d_i\} + \sum_{i\neq j} \alpha_{ij}v_{ij} - \langle\lambda,v\rangle +\sum_{i\in I}\gamma_i(\sum_{j\neq i}v_{ij} - x_{0,i}).
		\]
		
		Furthermore, as all the involved functions in this formula are convex and continuous we can compute the required subdifferential as a sum of separated subdifferentials (see, e.g., \cite{RockafellarWets1998Variational}). If we call $\Gamma(t) = \partial (\max\{0,\cdot\})(t)$, it is clear that 
		\[
		\Gamma(t) = \begin{cases} \{0\} & \mbox{ if }t<0 \\
			\{1\} & \mbox{ if }t>0 \\ [0,1] & \mbox{ if }t=0
		\end{cases}.
		\]
		Therefore, Recalling that that $x_i = x_i(v)$ given by \eqref{eq:Kirchoff}, by the convex subdifferential chain rule (see, e.g., \cite[Chapter 16]{BauschkeCombettes2017}), 
		\begin{align*}
			\partial \left(\sum_{i=1}^n cp_i\max\{-x_i-y_i,-d_i\}\right) &= \sum_{i=1}^n cp_i \partial \left(\max\{0,-x_i-y_i+d_i\}\right) \\
			&= \sum_{i=1}^n cp_i \Gamma(-x_i-y_i+d_i)\nabla_v(-x_i) \\
			&= \sum_{i=1}^n cp_i \Gamma(-x_i-y_i+d_i)(e_{i\bullet} - e_{\bullet i})
		\end{align*}
		With this formula in mind, the inclusion $0\in \partial \mathcal{L}(v)$ is equivalent to the existence of a vector $\beta \in \R^{n}$ such that $\beta_i\in p_i\Gamma (-x_i-y_i+d_i)$ for every $i\in I$, and such that
		\begin{align*}
			0 = \sum_{i=1}^n c \beta_i(e_{i\bullet} - e_{\bullet i}) + \nabla \left(\sum_{i\neq j} \alpha_{ij}v_{ij} - \langle\lambda,v\rangle +\sum_{i\in I}\gamma_i (\sum_{j\neq i}v_{ij} - x_{0,i})\right).
		\end{align*}
		The above vector equation can be equivalently written as the set of equations
		\begin{align*}
			0 = c(\beta_i - \beta_j) + \alpha_{ij} - \lambda_{ij} + \gamma_{i},\quad\forall i\neq j.
		\end{align*}
		
		where $\{\beta_i\ :\ i\in I\}$ are new variables verifying the inclusion \eqref{eq:CoefBeta-EVSI}.\medskip
		
		The complementary equations are still the same as in \eqref{eq:In-Proof:complementarity}. Thus, putting all together, we conclude that the MPCC reformulation of \eqref{eq:Leaders-Problem-SWS} is indeed given by \eqref{eq:MPCC-SWS}. Finally, similar to the Wait-and-See case, we can prove the following:
		\begin{itemize}
			\item If $\gamma_i=0$, then \[\lambda_{ij} = c\beta_i - c\beta_j+\alpha_{ij} \leq \DSr{2p_{\max} + \alpha_{ij}},\quad \forall j\in\{1,\ldots,n\}.\]
			\item If $\gamma_i\neq 0$, then the second complementary equation implies that $x_0{_i} = \sum_{j\neq i} v^*_{ij}$. Since $x_0{_i}\neq 0$, this also implies that there exists a value of $j$ such that $v^*_{ij}\neq 0$, in which case $\lambda_{ij} = 0$ using the first complementary equation. Hence, we conclude that
			\[
			c\beta_i-c\beta_j + \alpha_{ij} + \gamma_i = 0 \implies \gamma_i = c\beta_j-c\beta_i - \alpha_{ij} \leq 2p_{\max},\] and so we can compute  \[\lambda_{ij} = c\beta_i-c\beta_j + \gamma_i + \alpha_{ij} \leq 4p_{\max},\quad \forall j\in\{1,\ldots,n\}.
			\]
			
		\end{itemize}
		Regardless the case, we conclude that 
		\[
		\lambda_{ij}\in[0,4p_{\max}+\alpha_{ij}],\quad \gamma_i\in[0,2p_{\max}],
		\]
		finishing our proof.
	\end{proof}
	
	This proposition allows us to replicate the big-M strategy used with the Wait-and-See value, regarding the complementarity constraints. In the Shared-Wait-and-See case, one last numerical consideration involves the additional constraints used to tackle the $\beta_i$ terms. We define now \[C=p_{\max}(\max\{ d_{0,i}(\omega_k)\ :\ i\in I, k\in J  \} + N_0)\] and two additional boolean variables,  
	\[
	s_i,t_i\in\{0,1\}\ ,\quad s_i+t_i\leq 1
	\]
	so at most, one of them gets the value $1$. Then, we add the following constraints:
	\[
	\beta_i\leq p_{i,\max}(1-s_i)\ ,\quad x_i+y_i-d_i \geq -C(1-s_i)
	\]
	\[
	p_i- \beta_i\leq p_{i,\max}(1-t_i)\ ,\quad x_i+y_i-d_i \leq C(1-t_i)
	\]
	\[
	x_i+y_i-d_i\leq C(s_i+t_i)\ ,\quad x_i+y_i-d_i \geq -C(s_i+t_i).
	\]
	
	Here we have three feasible scenarios:
	
	\begin{itemize}
		\item If $s_i = 1$, then $t_i = 0$, the first set of equations leads to $\beta_i=0$, and the other ones leave $x_i+y_i-d_i$ able to get positive values.
		\item If $t_i = 1$, then $s_i = 0$, the second set of equations leads to $\beta_i = p_i$, and the other ones leave $x_i+y_i-d_i$ able to get negative values.
		\item If $s_i = t_i = 0$, the third set of equations leads to $x_i+y_i-d_i = 0$, and the other ones leave $\beta_i\in [0,p_{i,\max}]$.
	\end{itemize}
	
	The final problem we solve for each $\zeta = (y,d_0)$ is then given by
	\begin{equation}\label{eq:Reformulation-SWS-MIP}\small
		\begin{array}{cl}
			\displaystyle \max_{\GL{p,v,\lambda,\gamma,s,t,w,z,\beta}} & \displaystyle \sum_{i=1}^n (1-c) p_i\cdot \min(x_i+y_i,d_i)\\ 
			\mbox{s.t.}& \begin{cases}
				p_i \in [ p_{i,\min}, p_{i,\max} ],\quad \forall i\in I\\
				c(-\beta_i + \beta_j) + \alpha_{ij} - \lambda_{ij} + \gamma_{i} = 0,\quad \forall i\in I,\ \forall j\neq i \\
				-M z_{ij}\leq \lambda_{ij} \leq M z_{ij}\\
				-M (1-z_{ij})\leq v_{ij}\leq M (1-z_{ij})\\
				-M w_i\leq \gamma_i \leq M w_i\\
				\displaystyle -M (1-w_i)\leq \sum_{j\neq i} v_{ij} - x_0{_i}\leq M (1-w_i)\\
				\displaystyle x_{i} = x_{0,i} + \sum_{j\neq i} v_{ji} - \sum_{k\neq i} v_{ik},\quad\forall i\in I \\
				\beta_i\leq p_{i,\max}(1-s_i), \quad \forall i\in I\\ 
				x_i+y_i-d_i \geq -C(1-s_i),\quad \forall i\in I\\
				p_i- \beta_i\leq p_{i,\max}(1-t_i),\quad \forall i\in I\\
				x_i+y_i-d_i \leq C(1-t_i),\quad \forall i\in I\\
				x_i+y_i-d_i\leq C(s_i+t_i)\quad \forall i\in I\\
				x_i+y_i-d_i \geq -C(s_i+t_i),\quad \forall i\in I \\
				s_i+t_i\leq 1,\quad \forall i\in I \\
				z_{ij}\in\{0,1\},\ w_i,s_i,t_i\in\{0,1\},\quad \forall i\in I
			\end{cases}
		\end{array}
	\end{equation}
	
	\begin{remark}\label{rem:bilinearity}
		It is worth to notice that both MPCC reformulations \eqref{eq:Problem-WS-MIP} and \eqref{eq:Reformulation-SWS-MIP} are mixed-integer bilinear programming problems. \GL{The bilinearity comes from two sources: 1) the (implicit) product $p_i\cdot (v_{ij}-v_{ji})$ in the objective function (becoming non-concave), which is common for both WS and SWS problems; and 2) the product $p_i\beta_{i,j}$ appearing in the second line of constraints of the WS final reformulation. This second source of bilinearity is much harder to handle since it is formed by polynomial equality constraints.} While in general even the simplest bilevel programming problems are NP-hard \cite{Ben-Ayed_Computational_1990},
		mixed-integer bilinear problems can be handled efficiently by some commercial solvers, like \texttt{Gurobi} \cite{gurobi}. Of course, the treatment of these problems \DSr{follows branch-and-bound techniques that might not finish within reasonable time, but they have very good performance in practice.}
	\end{remark}
	
	\DSr{\begin{remark}
			The above results (Propositions \ref{thm:WS} and \ref{thm:SWS}) can be easily adapted for the case where some initial distributions $x_{0,i}$ are zero. In these cases, as we mentioned, we would need to remove the exiting flow variables $\{v_{ij}\ :\ j\neq i\}$ from the \GL{follower}'s problem and \GL{analyse} the indexes $i\in I$ separately between the cases when $x_{0,i}$ is zero or not. We prefer to avoid this heavy notation \GL{in this model}. 
	\end{remark}}
	
	\section{Numerical Results and Discussion}\label{sec:Numerical}
	
	In order to compare the WS and SWS indicators and test the previously presented formulations, we designed some numerical experiments with artificial data. \DSr{These experiments are meant to be illustrative as a proof of concept. Due to the several simplifications \GL{applied to this model}, real-data experiments would not be valid, nor consistent. However, here we aim to see how EVSI behaves and \GL{how different WS and SWS can be}, even in this simplified setting. }\medskip

	We consider $I=\{1,2,3,4\}$ (four connected zones) who simulate four different communes of the Metropolitan Region of Chile (Santiago, Renca, Maipu, La Florida), $m=3$ (low, normal and high demand scenarios) and $c=0.75$ (75$\%$ of the ride fare is taken by the driver). Equation \eqref{eq:Def-Demand}, that represents the connection between the ride price on each zone and the respective effective demand, is considered with $\delta = 0.9$, and the ride prices are delimited on each zone with $p_{i,\min} = 2.5$ USD and $p_{i,\max} = 12.5$ USD. The costs $\alpha_{ij}$ are modeled considering the actual distance between the four communes and the price of gasoline in May 2021, i.e. $1.10$ USD per liter.
	
	\begin{table}[!ht]
		\begin{center}
			\begin{minipage}{0.5\textwidth}
				\centering 
				\begin{tabular}{|c|c|c|c|c|}
					\hline 
					Distances (km) & 1 & 2 & 3 & 4 \\
					\hline
					\hline
					1 & - & 11 & 17 & 20 \\
					\hline
					2 & 11 & - & 22 & 33  \\
					\hline
					3 & 17 & 22 & - & 18  \\
					\hline
					4 & 20 & 33 & 18 & -  \\
					\hline
				\end{tabular}
				\vspace{0.3cm}
				\caption{Distances considered between zones.}
			\end{minipage}
		\end{center}
	\end{table}
	
	A total fleet of $N_0 = 1000$ unmatched drivers is considered. For every $i\in I$, we generated 10 uniformly distributed values $h_{0,i}$ in the $[0,1]$ interval. From those values, we built samples for the nominal demand given by 
	\[
	\bar{d}_{0,i} = D_0h_{0,i},\quad \forall i\in I.
	\] 
	
	For each of these scenarios, we simulated a hundred samples for the triangular distributed nominal demand $d_{0,i}\sim \mathrm{Tri}(0.7\bar{d}_{0,i},1.3\bar{d}_{0,i})$. These simulations are combined with the following scenarios for the demand and the previously matched drivers:
	\begin{enumerate}
		\item We consider the aggregated demand coefficient $D_0$ as a function of the total fleet of unmatched drivers: that is, $D_0 = PN_0$, where $P=1,2,3,4,5$. \GL{We refer to $P$ as the demand multiplier}. This leads to the construction of the samples for $(d_{0,i}\, :\ i\in I)$.
		\item The quantity of previously matched drivers $y_j\sim U(0,\bar{y})$, with $\bar{y}$ as a proportion of the total fleet of unmatched drivers, $\bar{y} = QN_0$, with $Q = 0.25, 0.5, 0.75$. \GL{We refer to $Q$ as the fleet multiplier.}
	\end{enumerate}
	
	Therefore, we consider a total of 15 different scenarios as a combination of the values for $D_0$ and $\bar{y}$. Then, for each scenario, we consider 10 different nominal values for the vector $\bar{d}_0$ (given by the generated vectors $(h_{0,j})_{j\in I}$), and for each of those nominal values, we compute solutions for the WS and SWS reformulated problems for 100 different samples of the pairs $(y,d_0)$. Thus, for each scenario, we solved 1000 samples.

	\begin{figure}[ht]
		
		\begin{center}
			\begin{minipage}{0.7\textwidth}
				\hspace*{-1cm}   
				\centering
				\begin{tikzpicture}[scale=0.6]
					\pgfplotsset{
						xmin=1, xmax=5, legend pos = south east
					}
					
					\begin{axis}[
						axis y line*=left,
						/pgf/number format/1000 sep={},
						ymin=1000, ymax=5000,
						xlabel=Demand coefficient,
						ylabel={Total Income [USD]},
						xtick align = outside,
						x tick label style={
							/pgf/number format/.cd,
							fixed,
							fixed zerofill,
							precision=0,
							/tikz/.cd
						}
						]
						\addplot[color=blue,mark=o,dashed, thick, mark options={solid}] table [x index=0, y index=1, col sep=comma] {Y1.txt};
						\label{plot_Y1_WS}
						\addlegendentry{\small Wait-and-See}
						\addplot[color=red,mark=triangle,dashed, thick, mark options={solid}] table [x index=0, y index=2, col sep=comma] {Y1.txt};
						\label{plot_Y1_SWS}
						\addlegendentry{\small Shared-Wait-and-See}
					\end{axis}
				\end{tikzpicture}
				\caption{WS and SWS Comparison: $N_0 = 1000$, $\bar{y}=0.25N_0$}\label{fig:WS-SWS-1}
			\end{minipage}
		\end{center}
		\begin{minipage}{0.4\textwidth}
			\hspace*{-1cm}
			\begin{tikzpicture}[scale=0.6]
				\pgfplotsset{
					xmin=1, xmax=5, legend pos = south east
				}
				
				\begin{axis}[
					axis y line*=left,
					/pgf/number format/1000 sep={},
					ymin=1000, ymax=5000,
					xlabel=Demand coefficient,
					ylabel={Total Income [USD]},
					xtick align = outside,
					x tick label style={
						/pgf/number format/.cd,
						fixed,
						fixed zerofill,
						precision=0,
						/tikz/.cd
					}
					]
					\addplot[color=blue,mark=o,dashed, thick, mark options={solid}] table [x index=0, y index=1, col sep=comma] {Y2.txt};
					\label{plot_Y2_WS}
					\addlegendentry{\small Wait-and-See}
					\addplot[color=red,mark=triangle,dashed, thick, mark options={solid}] table [x index=0, y index=2, col sep=comma] {Y2.txt};
					\label{plot_Y2_SWS}
					\addlegendentry{\small Shared-Wait-and-See}
				\end{axis}
			\end{tikzpicture}
			\caption{WS and SWS Comparison: $N_0 = 1000$, $\bar{y}=0.5N_0$}
			\label{fig:WS-SWS-2}
		\end{minipage}
		\hfill
		\begin{minipage}{0.4\textwidth}
			\hspace*{-1cm}
			\begin{tikzpicture}[scale=0.6]
				\pgfplotsset{
					xmin=1, xmax=5, legend pos = south east
				}
				
				\begin{axis}[
					axis y line*=left,
					/pgf/number format/1000 sep={},
					ymin=1000, ymax=5000,
					xlabel=Demand coefficient,
					ylabel={Total Income [USD]},
					xtick align = outside,
					x tick label style={
						/pgf/number format/.cd,
						fixed,
						fixed zerofill,
						precision=0,
						/tikz/.cd
					}
					]
					\addplot[color=blue,mark=o,dashed, thick, mark options={solid}] table [x index=0, y index=1, col sep=comma] {Y3.txt};
					\label{plot_Y3_WS}
					\addlegendentry{\small Wait-and-See}
					\addplot[color=red,mark=triangle,dashed, thick, mark options={solid}] table [x index=0, y index=2, col sep=comma] {Y3.txt};
					\label{plot_Y3_SWS}
					\addlegendentry{\small Shared-Wait-and-See}
				\end{axis}
			\end{tikzpicture}
			\caption{WS and SWS Comparison: $N_0 = 1000$, $\bar{y}=0.75N_0$}
			\label{fig:WS-SWS-3}
		\end{minipage}
	\end{figure}   
	
	For each sample of $(y,d_0)$, we solved problems \eqref{eq:Problem-WS-MIP} and \eqref{eq:Reformulation-SWS-MIP} using \texttt{Gurobi v9.1.2} as solver \cite{gurobi}, and \texttt{Julia v1.6.2} as programming language \cite{bezanson2017julia}, with the extra constraint of having integer fluxes between the zones. \GL{All simulations were executed in a computer with an Intel Core(TM) i7-10700F processor, running at 2.90GHz, with 16 GB of RAM, running Windows 10 Pro}. \GL{The code and the exact samples we use are available at \url{https://github.com/dasalas22/EVSI-Ridehailing}.}\medskip
	
	Our results are displayed in Figures \ref{fig:WS-SWS-1}, \ref{fig:WS-SWS-2} and \ref{fig:WS-SWS-3}, which are organized as follow: each figure represents 5 scenarios, given by a \GL{fixed value of the fleet multiplier $Q$ and the five values of the demand multiplier $P$}. The plotted values are the mean values from the 1000 samples for the corresponding scenario: in blue dots, the values for the Wait-and-See problem \eqref{eq:Problem-WS-MIP}; in red triangles, the values for the Shared-Wait-and-See problem \eqref{eq:Reformulation-SWS-MIP}. \medskip
	
	As we can see in Figure \ref{fig:WS-SWS-1}, the SWS solution generates a greater total income for the scenarios with $\bar{y}=0.25N_0$, especially in the middle scenarios, when the aggregated demand coefficient does not reach extremely low or extremely high values. This behavior seems to be the same as the floating population of previously matched drivers increases, that is in Figures \ref{fig:WS-SWS-2} and \ref{fig:WS-SWS-3}, where $\bar{y}=0.5N_0$ and $\bar{y}=0.75N_0$ respectively. It seems that the $EVSI$, which is given by $SWS-WS$, is positive and has a concave behavior: when the aggregated demand is too small or too large, the $EVSI$ seems to be zero, reaching its maximum in a middle value. \medskip
	
	Another effect we observe in our results is that pricing alone is not enough to modify the behavior of the drivers. Their beliefs about the chances of getting a ride are also influential. Thus the revealing of such information has an important effect in the final reallocation.\medskip
	
	\GL{In what follows, we display the indicators of performance given by \texttt{Gurobi} on our experiments, that is, CPU time and optimality gap. We present the average results, aggregated by demand multiplier and fleet multiplier.}\medskip
	
	\GL{The average CPU times are displayed in Figures \ref{fig:WS-SWS-t1}, \ref{fig:WS-SWS-t2} and \ref{fig:WS-SWS-t3}, organized as the numerical results in the first figures, and summarized on Tables \ref{tab:WS-t} and \ref{tab:SWS-t}.}  \GL{Tables \ref{tab:WS-mog} and \ref{tab:SWS-mog} summarize the mean relative optimality gap reported by the optimization solver, considering all of the samples used for the numerical experiments, for the WS and SWS computation.}
	
	\begin{figure}[ht]
		
		\begin{center}
			\begin{minipage}{0.7\textwidth}
				\hspace*{-1cm}   
				\centering
				\begin{tikzpicture}[scale=0.6]
					\pgfplotsset{
						xmin=1, xmax=5, legend pos = north west
					}
					
					\begin{axis}[
						axis y line*=left,
						/pgf/number format/1000 sep={},
						ymin=0, ymax=750000,
						xlabel=Demand coefficient,
						ylabel={CPU Time [s]},
						xtick align = outside,
						x tick label style={
							/pgf/number format/.cd,
							fixed,
							fixed zerofill,
							precision=0,
							/tikz/.cd
						}
						]
						\addplot[color=blue,mark=o,dashed, thick, mark options={solid}] table [x index=0, y index=1, col sep=comma] {Y1_t.txt};
						\label{plot_Y1t_WS}
						\addlegendentry{\small Wait-and-See}
						\addplot[color=red,mark=triangle,dashed, thick, mark options={solid}] table [x index=0, y index=2, col sep=comma] {Y1_t.txt};
						\label{plot_Y1t_SWS}
						\addlegendentry{\small Shared-Wait-and-See}
					\end{axis}
				\end{tikzpicture}
				\caption{\GL{WS and SWS - CPU Time Comparison, $\bar{y}=0.25N_0$}}\label{fig:WS-SWS-t1}
			\end{minipage}
		\end{center}
		\begin{minipage}{0.4\textwidth}
			\hspace*{-1cm}
			\begin{tikzpicture}[scale=0.6]
				\pgfplotsset{
					xmin=1, xmax=5, legend pos = north west
				}
				
				\begin{axis}[
					axis y line*=left,
					/pgf/number format/1000 sep={},
					ymin=0, ymax=120000,
					xlabel=Demand coefficient,
					ylabel={CPU Time [s]},
					xtick align = outside,
					x tick label style={
						/pgf/number format/.cd,
						fixed,
						fixed zerofill,
						precision=0,
						/tikz/.cd
					}
					]
					\addplot[color=blue,mark=o,dashed, thick, mark options={solid}] table [x index=0, y index=1, col sep=comma] {Y2_t.txt};
					\label{plot_Y2t_WS}
					\addlegendentry{\small Wait-and-See}
					\addplot[color=red,mark=triangle,dashed, thick, mark options={solid}] table [x index=0, y index=2, col sep=comma] {Y2_t.txt};
					\label{plot_Y2t_SWS}
					\addlegendentry{\small Shared-Wait-and-See}
				\end{axis}
			\end{tikzpicture}
			\caption{\GL{WS and SWS - CPU Time Comparison, $\bar{y}=0.5N_0$}}
			\label{fig:WS-SWS-t2}
		\end{minipage}
		\hfill
		\begin{minipage}{0.4\textwidth}
			\hspace*{-1cm}
			\begin{tikzpicture}[scale=0.6]
				\pgfplotsset{
					xmin=1, xmax=5, legend pos = north west
				}
				
				\begin{axis}[
					axis y line*=left,
					/pgf/number format/1000 sep={},
					ymin=0, ymax=100000,
					xlabel=Demand coefficient,
					ylabel={CPU Time [s]},
					xtick align = outside,
					x tick label style={
						/pgf/number format/.cd,
						fixed,
						fixed zerofill,
						precision=0,
						/tikz/.cd
					}
					]
					\addplot[color=blue,mark=o,dashed, thick, mark options={solid}] table [x index=0, y index=1, col sep=comma] {Y3_t.txt};
					\label{plot_Y3t_WS}
					\addlegendentry{\small Wait-and-See}
					\addplot[color=red,mark=triangle,dashed, thick, mark options={solid}] table [x index=0, y index=2, col sep=comma] {Y3_t.txt};
					\label{plot_Y3t_SWS}
					\addlegendentry{\small Shared-Wait-and-See}
				\end{axis}
			\end{tikzpicture}
			\caption{\GL{WS and SWS - CPU Time Comparison, $\bar{y}=0.75N_0$}}
			\label{fig:WS-SWS-t3}
		\end{minipage}
	\end{figure}

	\begin{table}[!ht]
		\centering 
		\begin{tabular}{|c|c|c|c|}
			\hline 
			Demand Multiplier & $\bar{y}=0.25N_0$ & $\bar{y}=0.5N_0$ & $\bar{y}=0.75N_0$  \\
			\hline
			\hline
			1 & 366.95 & 229.59  & 187.97 \\
			\hline
			2 & 2284.78 & 524.38 & 306.82 \\
			\hline
			3 & 57459.26 & 6810.38 & 5253.85 \\
			\hline
			4 & 152732.21 & 84520.70 & 24285.40 \\
			\hline
			5 & 748790.72 & 118391.25 & 95443.21 \\
			\hline                
		\end{tabular}
		\vspace{0.3cm}
		\caption{\centering\GL{CPU Time for WS Computation [s].}}
		\label{tab:WS-t}
	\end{table}
	\begin{table}[!ht]
		\centering 
		\begin{tabular}{|c|c|c|c|}
			\hline 
			Demand Multiplier & $\bar{y}=0.25N_0$ & $\bar{y}=0.5N_0$ & $\bar{y}=0.75N_0$  \\
			\hline
			\hline
			1 & 116.91 & 43.61 & 18.79 \\
			\hline
			2 & 205.97 & 135.75 & 74.56 \\
			\hline
			3 & 560.21 & 167.47 & 110.70 \\
			\hline
			4 & 8679.34 & 756.37 & 188.07 \\
			\hline
			5 & 27514.56 & 2638.26 & 557.72 \\
			\hline                
		\end{tabular}
		\vspace{0.3cm}
		\caption{\centering\GL{CPU Time for SWS Computation [s].}}
		\label{tab:SWS-t}
	\end{table}
	\begin{table}[!ht]
		\centering 
		\begin{tabular}{|c|c|c|c|}
			\hline 
			Demand Multiplier & $\bar{y}=0.25N_0$ & $\bar{y}=0.5N_0$ & $\bar{y}=0.75N_0$  \\
			\hline
			\hline
			1 & 1.53E-04 & 1.13E-05 & 1.41E-05 \\
			\hline
			2 & 4.31E-04 & 1.12E-03	& 1.46E-03 \\
			\hline
			3 & 2.64E-02 & 6.15E-03	& 4.42E-03 \\
			\hline
			4 & 1.18E-01 & 4.14E-02 & 1.97E-02 \\
			\hline
			5 & 1.69E-01 & 8.97E-02 & 3.89E-02 \\
			\hline                
		\end{tabular}
		\vspace{0.3cm}
		\caption{\centering\GL{Mean Relative Gap for WS Computation.}}
		\label{tab:WS-mog}
	\end{table}
	\begin{table}[!ht]
		\centering
		\begin{tabular}{|c|c|c|c|}
			\hline 
			Demand Multiplier & $\bar{y}=0.25N_0$ & $\bar{y}=0.5N_0$ & $\bar{y}=0.75N_0$  \\
			\hline
			\hline
			1 & 8.09E-06 & 4.24E-06 & 5.73E-06 \\
			\hline
			2 & 8.57E-06 & 1.45E-05 & 6.95E-06 \\
			\hline
			3 & 7.98E-03 & 1.13E-05 & 1.16E-05 \\
			\hline
			4 & 1.36E-01 & 8.20E-03 & 9.31E-06 \\
			\hline
			5 & 2.58E-01 & 1.07E-01 & 2.08E-02 \\
			\hline                
		\end{tabular}
		\vspace{0.3cm}
		\caption{\centering\GL{Mean Relative Optimality Gap for SWS Computation.}}
		\label{tab:SWS-mog}
	\end{table}
	\newpage
	\GL{By looking at the results above, we deduce that the WS problems are a lot harder than the SWS problems, at least for the formulations presented in this work. This is consistent with Remark \ref{rem:bilinearity}, where we observed that the WS final reformulation carried polynomial equality constraints, which are absent in the couterpart of SWS. Also, even though optimality gaps are always less that 0.3\%, it is worth to note that we obtain less accurate results for the demand multipliers $P=3,4,5$. This is consistent with the results of CPU times. Probably, it means that the allocation problems become harder for those multipliers. }\medskip
	
	Finally, as an illustrative example, we show how different the WS and the SWS solutions are for a particular scenario, given by one of the numerical experiments we did. In this scenario, $\bar{y} = 750$ and $P = 5$, and the values for $x_0$, $y$ and $d_0$ are the ones described in Table \ref{tab:Values_Scenario}.
	
	\begin{table}[ht]
		\begin{center}
			\begin{minipage}{0.7\textwidth}
				\centering
				\hspace{-1cm}
				\begin{tabular}{c|c|c|c}
					Zone & $x_0$ & $y$ & $d_0$  \\
					\hline
					1   & 107& 444.55 & 510\\
					2 & 283 & 296.68 & 1945\\
					3 & 399& 420.20 & 1010\\
					4 & 211 & 568.07 & 1535\\
					\hline 
				\end{tabular}
				\vspace{0.3cm}
				\caption{Particular scenario with $\bar{y} = 750$ and $P = 5$.}
				\label{tab:Values_Scenario}
			\end{minipage}
		\end{center}
	\end{table}
	
	The WS and SWS solutions are presented in Figures \ref{fig:WS-flow} and \ref{fig:SWS-flow}, respectively. The missing edges in each graph have $0$-flow. The value functions for each solution are
	$\psi(y,d_0) = 3935.4 \mbox{ USD}$ and $\varphi(y,d_0) = 4635.4 \mbox{ USD}$.
	
	\begin{figure}[ht]
		\begin{minipage}{0.4\textwidth}
			\centering
			\begin{tikzpicture}[->,>=stealth',shorten >=0pt,auto,node distance=2cm,
				semithick,square/.style={regular polygon,regular polygon sides=4}]
				
				\tikzstyle{every state}=[text=black,font = \bfseries, fill opacity = 0.35, text opacity = 1]
				\node[state] 		 (Z1) [fill=gray]                   					{1};
				\node[state] 		 (Z2) [fill=gray,above right of=Z1]       					{2};
				\node[state] 		 (Z4) [fill=gray,below right of=Z1]       					{4};
				\node[state] 		 (Z3) [fill=gray,above right of=Z4]       					{3};
				
				\node (X1) [below of=Z1,  node distance = 0.85cm]                   					{\small $\begin{array}{l}x_1 = 107\\ p_1 = 6.85\end{array}$};
				\node (X2) [above of=Z2,  node distance = 0.85cm]                   					{\small $\begin{array}{l}x_2 = 283\\ p_2 = 9.07\end{array}$};
				\node (X3) [below of=Z3,  node distance = 0.85cm]                   					{\hspace{0.5cm}\small $\begin{array}{l}x_3 = 93\\ p_3 = 6.05\end{array}$};
				\node (X4) [below of=Z4,  node distance = 0.85cm]                   					{\small $\begin{array}{l}x_4 = 517\\ p_4 = 8.46\end{array}$};
				
				\path 
				(Z3) edge node[sloped]{$306$} (Z4)
				;
			\end{tikzpicture}
			\caption{Graph with perfect information (Wait-and-see solution). Optimal value: 3935.4 USD.}
			\label{fig:WS-flow}
		\end{minipage}
		\hfill
		\begin{minipage}{0.4\textwidth}
			
			\centering	
			\begin{tikzpicture}[->,>=stealth',shorten >=0pt,auto,node distance=2cm,
				semithick,square/.style={regular polygon,regular polygon sides=4}]
				
				\tikzstyle{every state}=[text=black,font = \bfseries, fill opacity = 0.35, text opacity = 1]
				\node[state] 		 (Z1) [fill=gray]                   					{1};
				\node[state] 		 (Z2) [fill=gray,above right of=Z1]       					{2};
				\node[state] 		 (Z4) [fill=gray,below right of=Z1]       					{4};
				\node[state] 		 (Z3) [fill=gray,above right of=Z4]       					{3};
				
				\node (X1) [below of=Z1,  node distance = 0.85cm]                   					{\small $\begin{array}{l}x_1 = 0\\ p_1 = 6.80\end{array}$};
				\node (X2) [above of=Z2,  node distance = 0.85cm]                   					{\small $\begin{array}{l}x_2 = 613\\ p_2 = 9.50\end{array}$};
				\node (X3) [below of=Z3,  node distance = 0.85cm]                   					{\hspace{0.5cm}\small $\begin{array}{l}x_3 = 1\\ p_3 = 6.94\end{array}$};
				\node (X4) [below of=Z4,  node distance = 0.85cm]                   					{\small $\begin{array}{l}x_4 = 386\\ p_4 = 9.54\end{array}$};
				
				\path (Z1) edge node[sloped]{$107$} (Z4)
				(Z3) edge node[sloped]{$330$} (Z2)
				(Z3) edge node[sloped]{$68$} (Z4)
				;
			\end{tikzpicture}
			\caption{Graph with shared information (Shared wait-and-see solution). Optimal value: 4635.4 USD.}
			\label{fig:SWS-flow}
		\end{minipage}
	\end{figure} 
	
	The reader can appreciate that the revelation of information produces a huge change in the solution. For this scenario, Zone 2 has a huge demand, but drivers simply do not know it since this value is vastly different of what they usually observe. With the second reallocation (Figure \ref{fig:SWS-flow}), the ride-hailing company can increase the prices in zones 2 and 4.
	
	
	\section{Final comments}\label{sec:Final}
	
	In this work, we presented a new indicator, the \emph{Expected Value of Shared Information}, that allows to measure the value of sharing information in the context of Stackelberg games. This indicator is relevant in problems where the leader has more information with respect to the follower. \medskip
	
	\DSr{We used this indicator to study the value of sharing information in the context of ride-hailing companies, considering the demand and part of the behavior of matched drivers as the asymmetric information. We then studied the problem of allocation: the ride-hailing company (leader) decides the spatial prices, while the unmatched drivers (the followers) decide their allocations. \medskip
		
		The main contribution of this work is the definition of the EVSI and the proof-of-concept in a simplified version of the allocation problem of unmatched drivers. We provided sound mathematical developments allowing us to get tractable formulations for numerical experiments. While our numerical results, coming from simulations with randomly generated data, strongly suggest that sharing information might be beneficial for the leader, we can only conclude that a more detailed study should be conducted and that the EVSI deserves attention in the context of ride-hailing.\medskip
		
		As a first work dealing with this new indicator in the context of ride-hailing companies, several simplifying assumptions where made: 1) we studied only the one-stage problem; 2) we simplified the drivers' equilibrium problem into a single welfare optimization problem; 3) we worked with artificially generated data; and 4) we assumed that the leader had access to a perfect forecast of the demand. However, motivated by the promising results we obtained here, we aim to study a more complex model considering: non-cooperative drivers; multiple stages; partial information sharing; data-driven distributions (based either on real-data or benchmark data available in the literature). Over such a development, the numerical experiments should be conducted over benchmark datasets, such as \cite{networkData}. All these challenges were out of the scope of this work, since our main objective, that we hope we have achieved, was to validate the EVSI as a tractable and pertinent indicator to measure the value of sharing information in ride-hailing. }

	
	\paragraph{Acknowledgements:} This work was partially supported by Center of Mathematical Modelling, FB210005, BASAL funds for centers of excellence from ANID-Chile, and by MathAmsud program through the project MATHAMSUD 20-MATH-08. \GL{The second author was partially funded by ANID-Chile through the grant FONDECYT 11220586. The authors thank the anonymous referees for their valuable insight during the revision process, that help us to substantially improved the first version of this work.}
	

	\vspace{0.5cm}
	
	\noindent Gianfranco Liberona
	
	\medskip
	
	\noindent Universidad de
	O'Higgins\newline Av. Libertador Bernardo O'Higgins 611, Rancagua, Chile
	\smallskip
	
	\noindent E-mail: \texttt{gianfranco.liberona@uoh.cl} \newline\vspace{0.4cm}
	
	\noindent David Salas
	
	\medskip
	
	\noindent Instituto de Ciencias de la Ingenieria, Universidad de
	O'Higgins\newline Av. Libertador Bernardo O'Higgins 611, Rancagua, Chile
	\smallskip
	
	\noindent E-mail: \texttt{david.salas@uoh.cl} \newline\noindent
	\texttt{http://davidsalasvidela.cl} \medskip
	
	\noindent Research supported by the grants: \smallskip\newline%
	\textsc{FONDECYT 11220586} (\textsc{ANID}-Chile)\newline
	\textsc{MathAmsud 20-MATH-08}
	\newline\textsc{CMM
		FB210005 BASAL} funds for centers of excellence (\textsc{ANID}-Chile)
	\newline\vspace{0.4cm}
	
	\noindent Léonard von Niederh\"{a}usern
	
	\medskip
	
	\noindent Centro de Modelamiento Matemático CNRS IRL 2807, Universidad de Chile,\newline
	Beauchef 851, Santiago, Chile. (on leave)\newline
	
	\noindent E-mail: \texttt{leonard.vonniederhausern@uoh.cl}\medskip
	
	\noindent Research supported by the grants: \smallskip\newline
	\textsc{MathAmsud 20-MATH-08}\newline
	\textsc{CMM
		FB210005 BASAL} funds for centers of excellence (\textsc{ANID}-Chile)
	
\end{document}